\documentclass[12pt]{article}
\catcode `!=11

\newdimen\squaresize 
\newdimen\thickness 
\newdimen\Thickness
\newdimen\ll! \newdimen \uu! 
\newdimen\dd! \newdimen \rr! \newdimen \temp!

\def\sq!#1#2#3#4#5{%
\ll!=#1 \uu!=#2 \dd!=#3 \rr!=#4
\setbox0=\hbox{%
 \temp!=\squaresize\advance\temp! by .5\uu!
 \rlap{\kern -.5\ll! 
 \vbox{\hrule height \temp! width#1 depth .5\dd!}}%
%
 \temp!=\squaresize\advance\temp! by -.5\uu!  
 \rlap{\raise\temp! 
 \vbox{\hrule height #2 width \squaresize}}%
%
 \rlap{\raise -.5\dd!
 \vbox{\hrule height #3 width \squaresize}}%
%
 \temp!=\squaresize\advance\temp! by .5\uu!
 \rlap{\kern \squaresize \kern-.5\rr! 
 \vbox{\hrule height \temp! width#4 depth .5\dd!}}%
%
 \rlap{\kern .5\squaresize\raise .5\squaresize
 \vbox to 0pt{\vss\hbox to 0pt{\hss $#5$\hss}\vss}}%
}
 \ht0=0pt \dp0=0pt \box0
}

\def\vsq!#1#2#3#4#5\endvsq!{\vbox to \squaresize
  {\hrule width\squaresize height 0pt%
\vss\sq!{#1}{#2}{#3}{#4}{#5}}}

\newdimen \LL! \newdimen \UU! \newdimen \DD! \newdimen \RR!

\def\vvsq!{\futurelet\next\vvvsq!}
\def\vvvsq!{\relax
  \ifx     \next l\LL!=\Thickness \let\continue!=\skipnexttoken!
  \else\ifx\next u\UU!=\Thickness \let\continue!=\skipnexttoken!
  \else\ifx\next d\DD!=\Thickness \let\continue!=\skipnexttoken!
  \else\ifx\next r\RR!=\Thickness \let\continue!=\skipnexttoken!
  \else\ifx\next P\let\continue!=\place!
  \else\def\continue!{\vsq!\LL!\UU!\DD!\RR!}%
  \fi\fi\fi\fi\fi 
  \continue!}

\def\skipnexttoken!#1{\vvsq!}

\def\place! P#1#2#3{%
\rlap{\kern.5\squaresize\temp!=.5\squaresize\kern#1\temp!
  \temp!=\squaresize \advance\temp! by #2\squaresize
 \temp!=.5\temp!  \raise\temp!\vbox to 0pt%
{\vss\hbox to 0pt{\hss$#3$\hss}\vss}}\vvsq!}

\def\Young#1{\LL!=\thickness \UU!=\thickness \DD! = \thickness \RR! = \thickness
\vbox{\smallskip\offinterlineskip
\halign{&\vvsq! ## \endvsq!\cr #1}}}

\def\blank{\omit\hskip\squaresize}
\catcode `!=12

\usepackage{tikz,hyperref}

\voffset=-0.75in
\setlength{\oddsidemargin}{0in}
\setlength{\evensidemargin}{0in}
\setlength{\textwidth}{6.5in}
\setlength{\topmargin}{0in}
\setlength{\textheight}{9in}


\usepackage{amsthm,amsmath,amssymb}

\usepackage{graphicx}



\newcommand{\arxiv}[1]{\href{http://arxiv.org/abs/#1}{\texttt{arXiv:#1}}}


\usepackage{url}
\usepackage{xcolor}
\usepackage[all,cmtip]{xy}


\theoremstyle{plain}
\newtheorem{theorem}{Theorem}
\newtheorem{lemma}[theorem]{Lemma}
\newtheorem{corollary}[theorem]{Corollary}
\newtheorem{proposition}[theorem]{Proposition}

\theoremstyle{definition}
\newtheorem{definition}[theorem]{Definition}
\newtheorem{example}[theorem]{Example}

\theoremstyle{remark}
\newtheorem{remark}[theorem]{Remark}



\title{\bf Homomesy in products of two chains}


\author{James Propp\thanks{Partially supported by
NSF Grant \#1001905.}\\
\small Department of Mathematics\\[-0.8ex]
\small University of Massachusetts Lowell\\[-0.8ex] 
\small Lowell, MA, U.S.A.\\
\small \url{http://jamespropp.org}\\
\and
Tom Roby\\
\small Department of Mathematics\\[-0.8ex]
\small University of Connecticut\\[-0.8ex]
\small Storrs, CT, USA\\
\small \url{http://www.math.uconn.edu/~troby}
}




\newcommand{\cred}[1]{{\color{red}#1}}
\newcommand{\cblu}[1]{{\color{blue}#1}}

\newcommand{\cA}{{\mathcal{A}}}
\newcommand{\cO}{{\mathcal{O}}}
\newcommand{\cP}{{\mathcal{P}}}
\newcommand{\cS}{{\mathcal{S}}}
\newcommand{\cW}{{\mathcal{W}}}
\newcommand{\bR}{{\mathbf{R}}}
\newcommand{\bC}{{\mathbf{C}}}
\newcommand{\bZ}{{\mathbf{Z}}}
\newcommand{\bN}{{\mathbf{N}}}
\newcommand{\rowmotion}{\Phi}
\newcommand{\promotion}{\partial}
\newcommand{\symmdiff}{\bigtriangleup}
\newcommand{\outdeg}{{\rm outdeg}}
\newcommand{\fstat}{f}
\newcommand{\ra}{\stackrel{\tau}{\rightarrow}}
\newcommand{\da}{{\scriptstyle f} \downarrow \ \ \, }

\def\urltilde{\kern -.15em\lower .7ex\hbox{\~{}}\kern .04em}

\def\inv{\mathop{\rm inv}}

\def\SSYT{\mathop{\rm SSYT}}
\def\id{\mathop{\rm id}}
\def\eset{\emptyset}
\def\bc#1#2{\left(\kern -2pt{#1\atop #2} \kern -2pt\right)}

\def\ds{\displaystyle}

%

\begin{document}

\maketitle

\begin{abstract}
Many invertible actions $\tau$ on a set $\mathcal{S}$ of combinatorial objects,
along with a natural statistic $f$ on $\mathcal{S}$, exhibit the following
property which we dub \textbf{homomesy}:
the average of $f$ over each $\tau$-orbit in $\mathcal{S}$ is the
same as the average of $f$ over the whole set $\mathcal{S}$.  
This phenomenon was first noticed by Panyushev in 2007 in the context of
the rowmotion action on the set of antichains of a root poset;
Armstrong, Stump, and Thomas proved Panyushev's conjecture in 2011. 
We describe a theoretical framework for results of this kind 
that applies more broadly, giving examples in a variety of contexts.  
These include linear actions on vector spaces, sandpile dynamics, 
Suter's action on certain subposets of Young's Lattice, Lyness 5-cycles,
promotion of rectangular semi-standard Young tableaux,
and the rowmotion and promotion actions on certain posets.  
We give a detailed description of the latter situation 
for products of two chains.  

\bigskip\noindent \textbf{Keywords:} 
antichains,
ballot theorems,
homomesy,
Lyness 5-cycle,
orbit,
order ideals,
Panyushev complementation,
permutations, 
poset,
product of chains,
promotion,
rowmotion,
sandpile,
Suter's symmetry,
toggle group,
Young's Lattice, 
Young tableaux.
\end{abstract}

\section{Introduction}
\label{sec:int}
We begin with the definition of our main unifying concept,
and supporting nomenclature.
\begin{definition}\label{def:ce}
Given a set $\cS$, 
an invertible map $\tau$ from $\cS$ to itself such that each
$\tau$-orbit is finite, 
and a function (or ``statistic'') $\fstat: \cS \rightarrow K$
taking values in some field $K$ of characteristic zero,
we say 
the triple $(\cS,\tau,\fstat)$ 
exhibits {\bf homomesy}\footnote{Greek for ``same middle''} 
if there exists a constant $c \in K$
such that for every $\tau$-orbit $\cO \subset \cS$
\begin{equation}
\label{general-ce}
\frac{1} {\#\cO}
\sum_{x \in \cO} \fstat(x) = c .
\end{equation}
In this situation
we say that the function $f: \cS \rightarrow K$ is 
{\bf homomesic}
under the action of $\tau$ on $\cS$,
or more specifically \textbf{\textit{c}-mesic}.
\end{definition}
When $\cS$ is a finite set,
homomesy can be restated equivalently as 
all orbit-averages being equal to the global average:

\begin{equation}
\label{ce}
\frac{1}{\#\cO} \sum_{x \in \cO} \fstat(x) =
\frac{1}{\#\cS} \sum_{x \in \cS} \fstat(x).
\end{equation}
We will also apply the term homomesy more broadly to include the case
that the statistic $\fstat$ takes values
in a vector space over a field of characteristic 0
(as in sections \ref{subsec-linear} and \ref{ssec:sand}).

We have found many instances of (\ref{ce})
where $\cS$ is a finite collection of combinatorial objects 
(e.g., order ideals in a poset),
$\tau$ is a natural action on $\cS$ (e.g., rowmotion or promotion),
and $\fstat$ is a natural measure on $\cS$ (e.g., cardinality).
Many (but far from all) situations that support examples of homomesy also
support examples of the cyclic sieving phenomenon 
of Reiner, Stanton, and White~\cite{RSW04},
and more exploration of the links and differences is certainly in order.  
At the stated level of generality the notion of homomesy appears to be new, 
but specific instances can be found in earlier literature.  
In particular, Panyushev~\cite{Pan09} conjectured and Armstrong, 
Stump, and Thomas~\cite{AST11} proved the following homomesy result: 
if $\cS$ is the set of antichains in the
root poset of a finite Weyl group, $\rowmotion$ is the operation variously
called the Brouwer-Schrijver map~\cite{BS74}, the Fon-der-Flaass
map~\cite{Fon93,CF95}, the reverse map~\cite{Pan09}, Panyushev
complementation~\cite{AST11}, and rowmotion~\cite{SW12}, and $\fstat(A)$
is the cardinality of the antichain $A$, then $(\cS,\rowmotion,\fstat)$
satisfies (\ref{ce}).

Our main results for this paper involve studying the 
rowmotion action and also the (Striker-Williams) promotion action 
associated with the poset $P=[a]\times [b]$. 
(See Section~\ref{sec:chains} for precise definitions.
Note that we use $[n]$ to denote both the set $\{1,\dots,n\}$
and the natural poset with those elements, according to context.)
We show that the statistic
$\fstat :=\#A$, the size of the antichain, is homomesic with respect to
the promotion action, and that the statistic $\fstat =\#I(A)$, the
size of the corresponding order ideal, is homomesic with respect to both
the promotion and rowmotion actions.  

Although these results are of intrinsic interest,
we think the main contribution of the paper is its
identification of homomesy as a phenomenon that occurs quite widely. 
Within any linear space of functions on $\cS$,
the functions that are 0-mesic under $\tau$,
like the functions that are invariant under $\tau$,
form a subspace. There is a loose sense 
in which the notions of invariance and homomesy
(or, more strictly speaking, 0-mesy) are complementary;
an extremely clean case of this complementarity
is outlined in subsection~\ref{subsec-linear},
and a related complementarity 
(in the context of continuous rather than discrete orbits)
is sketched in subsection~\ref{subsec-circle}.
This article gives a general overview of the broader picture 
as well as a few specific examples done in more detail for the operators 
of promotion and rowmotion associated with the poset $[a]\times [b]$.  

We provide examples of homomesy in a wide variety of contexts.  
These include the following actions with corresponding statistics,
each of which is explained in more detail in the indicated subsections. 
All the examples in Section~\ref{sec:eg} are fairly independent of each other
and of our main new results in Section~\ref{sec:chains}, 
so the reader may focus on some examples more than others, according to taste.  

\begin{enumerate}
\item reversal of permutations with the statistic 
that counts inversions~[\S~\ref{subsec-inversions}];

\item cyclic rotation of words on $\{-1, +1 \}$ with 
the $\{0,1\}$-function that indicates whether a word satisfies 
the ballot condition~[\S~\ref{subsec-ballot}]; 

\item cyclic rotation of words on $\{-1, +1 \}$ with the statistic that counts 
the number of (multiset) inversions in the word~[\S~\ref{subsec-bits}]; 

\item linear maps which satisfy $T^{n}=1$ acting in a vector space $V$ 
with statistic the identity function~[\S~\ref{subsec-linear}]; 

\item the phase-shift action on simple harmonic motion
with statistics given by certain polynomial combinations
of position and velocity~[\S~\ref{subsec-circle}];  

\item the Lyness 5-cycle acting on (most of) $\bR^{2}$ 
with $f((x,y)) = \log |x^{-1} + x^{-2}|$ 
as the statistic~[\S~\ref{ssec:five}]; 

\item the action on recurrent sandpile configurations given by adding 1 grain
to the source vertex and then allowing the system to stabilize, 
with statistic the firing vector~[\S~\ref{ssec:sand}]; 

\item Suter's action on Young diagrams with a weighted 
cardinality statistic~[\S~\ref{ssec:suter}];

\item promotion in the sense of Sch\"utzenberger acting on 
semistandard Young tableaux of rectangular shape with statistic 
given by summing the entries in any centrally-symmetric
subset of cells of the tableaux~[\S~\ref{ssec:SSYT}],
as studied by Bloom, Pechenik, and Saracino~\cite{BPS13};

\item promotion (in the sense of \cite{SW12}) acting on the set 
of order ideals of $[a]\times [b]$ with 
the cardinality statistic~[\S~\ref{subsec-promotion}];

\item rowmotion acting on the set of order ideals of $[a]\times [b]$ 
with the cardinality statistic~[\S~\ref{sss:rmJP}]; and

\item rowmotion acting on the set of antichains of $[a]\times [b]$ 
with the cardinality statistic~[\S~\ref{sss:rmAP}].  

\end{enumerate}

The authors are grateful to Omer Angel, Drew Armstrong, Anders Bj\"orner, 
Robin Chapman, Joyce Chu, Barry Cipra, Karen Edwards, Robert Edwards, 
Darij Grinberg, Shahrzad Haddadan, Andrew Hone, Mike Joseph, Greg Kuperberg, 
Svante Linusson, Vic Reiner, Ralf Schiffler, Richard Stanley, Jessica Striker, 
Nathan Williams, Peter Winkler and Ben Young for useful conversations.  
Mike LaCroix wrote fantastic postscript code to generate animations 
and pictures that illustrate our maps operating on order ideals
on products of chains (Figures~\ref{fig:pm32a},~\ref{fig:pm32b},
and~\ref{fig:rm42}).  Ben Young also provided a diagram which we 
modified for Figure~\ref{fig:aw75}.  
Darij Grinberg's eagle eye caught many errors and
opportunities for improved exposition.  
An anonymous referee made very helpful suggestions for improving the
``extended abstract'' version of this paper that was presented at the
25th annual conference on Formal Power Series and Algebraic
Combinatorics, held in Paris in June 2013.  Another anonymous referee provided
stimulating ideas as well as very helpful recommendations for improving
the exposition in the journal version of the article.  Several of our ideas 
were first incubated at meetings of the long-running Cambridge Combinatorics
and Coffee Club (CCCC), organized by Richard Stanley.  

\section{Examples of Homomesy}\label{sec:eg}

Here we give a variety of examples of homomesy in combinatorics,
the first two of which long predate the general notion of homomesy;
we also give non-combinatorial examples
that establish links with other branches of mathematics.
For examples of homomesy associated with 
piecewise-linear maps and birational maps, see~\cite{EP13}.

\subsection{Inversions in permutations}
\label{subsec-inversions}

Let $\cS$ be the set of permutations of $\{1,2,\dots,n \}$,
let $\tau$ send $\pi_1\pi_{2}\dots\pi_n$ 
(a permutation written in one-line notation)
to its reversal $\pi_n\pi_{n-1}\dots\pi_1$
and let $\fstat(\pi)$ be the number of inversions in $\pi$.
Since $\tau^2$ is the identity,
and since $\fstat(\pi)+\fstat(\tau(\pi))=n(n-1)/2$,
$\fstat$ is $c\,$-mesic under the action of $\tau$,
where $c=n(n-1)/4$.

\subsection{Ballot theorems}
\label{subsec-ballot}

Fix two nonnegative integers $a$ and $b$ and set $n=a+b$.  
Let $\cS$ be the set of words $(s_1,s_2,\dots,s_n)$ of length $n$,
consisting of $a$ letters equal to $-1$ and $b$ letters equal to $+1$; 
we think of each such word as an order 
for counting $n$ ballots in a two-way election,
$a$ of which are for candidate A and $b$ of which are for candidate B.
If $a < b$, then candidate B will be deemed the winner
once all $a+b$ ballots have been counted,
and we ask for the probability that 
at every stage in the counting of the ballots candidate B is in the lead.
This probability is the same as the expected value of $\fstat(s)$,
where $\fstat(s)$ is 1 
if $s_1+\dots+s_i>0$ for all $1 \leq i \leq n$ and is 0 otherwise, 
and where $s$ is chosen uniformly at random from $\cS$.
Bertrand's Theorem states that this probability is $(b-a)/(b+a)$.

Dvoretzky and Motzkin's famous ``cycle lemma'' proof 
of Bertrand's Theorem~\cite{DM47} 
(see also Raney's lemma described on page 346 of~\cite{GKP})
may be recast in our framework as follows:

\begin{proposition}
\label{prop:ballot}
Let $\tau: = C_{L}: \cS \rightarrow \cS$ 
be the leftward cyclic shift operator 
that sends $(s_1,s_2,s_3,\dots,s_n)$ to $(s_2,s_3,\dots,s_n,s_1)$.
Then over any orbit $\mathcal{O}$ one has
\[
\frac{1}{\#\cO}\sum_{s \in \cal{O}} \fstat (s) = \frac{b-a}{b+a}\,. 
\]
In other words, $f$ is $c$-mesic with $c=\frac{b-a}{b+a}$.
\end{proposition}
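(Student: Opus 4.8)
The plan is to recast the Dvoretzky--Motzkin argument so that it speaks about individual $C_L$-orbits rather than about the whole set $\cS$. The essential input is the cycle lemma: for any sequence of integers each at most $1$ whose total sum is $k \ge 1$, exactly $k$ of its $n$ cyclic shifts, indexed by starting position $j = 0, 1, \dots, n-1$, have all partial sums strictly positive. In our setting each word $s \in \cS$ has entries in $\{-1,+1\}$ and total sum $s_1 + \dots + s_n = b - a$, which is $\ge 1$ precisely because $a < b$; this is exactly where the hypothesis $a<b$ enters. Applying the lemma to $s$ and noting that the position-indexed shift starting at position $j+1$ is $C_L^{j}(s)$, we get that among $C_L^{0}(s), C_L^{1}(s), \dots, C_L^{n-1}(s)$ exactly $b-a$ satisfy the ballot condition, i.e.
\[
\sum_{j=0}^{n-1} \fstat\bigl(C_L^{j}(s)\bigr) = b - a .
\]

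Next I would convert this position-indexed count into a sum over the distinct elements of a single orbit. Fix an orbit $\cO$ and set $d = \#\cO$; then $d \mid n$, the map $C_L$ has order $d$ on $\cO$, and as $j$ ranges over $0, 1, \dots, n-1$ the value $C_L^{j}(s)$ runs through each of the $d$ distinct elements of $\cO$ exactly $n/d$ times. Since $\fstat$ takes the same value on repeated entries, grouping the sum above by orbit element yields
\[
b - a = \sum_{j=0}^{n-1} \fstat\bigl(C_L^{j}(s)\bigr) = \frac{n}{d} \sum_{x \in \cO} \fstat(x) .
\]
Solving gives $\sum_{x \in \cO} \fstat(x) = d(b-a)/n = d(b-a)/(a+b)$, so the orbit average is
\[
\frac{1}{\#\cO} \sum_{x \in \cO} \fstat(x) = \frac{b-a}{a+b},
\]
which is the claimed constant $c$ and is manifestly independent of the orbit.

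The only genuine subtlety, and the step I would be most careful about, is the bookkeeping for periodic words: the cycle lemma counts shifts by starting position (hence with multiplicity), whereas homomesy concerns the distinct elements of $\cO$, and the two differ by exactly the factor $n/d$ whenever $s$ has a nontrivial cyclic period. Tracking this multiplicity is precisely what makes the position-indexed total $b-a$ collapse to the correct orbit sum. If a self-contained argument is preferred over citing \cite{DM47} or \cite{GKP}, the cycle lemma itself can be re-derived inline by analyzing the periodically extended partial-sum walk $S_0, S_1, \dots$ with $S_n = b-a$: a starting position $j$ is good exactly when $S_j$ is strictly below $S_{j+1}, \dots, S_{j+n}$, and because the steps are at most $1$ there is exactly one such position at each of the $b-a$ forward-minimum levels. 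I would regard invoking the cycle lemma directly as cleanest, so the real work of the proof is the orbit-wise reinterpretation above.
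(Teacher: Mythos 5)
Your proof is correct and takes essentially the same approach the paper intends: the paper gives no inline argument, instead asserting that the Dvoretzky--Motzkin cycle-lemma proof ``may be recast in our framework'' and deferring details to Renault's survey, and your orbit-wise recasting---exactly $b-a$ good starting positions among the $n$ cyclic shifts, then the $n/d$ multiplicity bookkeeping to pass from position-indexed counts to the $d$ distinct elements of the orbit---is precisely that recast, with the details the paper omits supplied correctly.
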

\noindent
See~\cite{R07} for details.


\subsection{Inversions in two-element multiset permutations}

\label{subsec-bits}

As in the preceding section, let $\cS$ be the set of words of length $n=a+b$
consisting of $a$ letters equal to $-1$ and $b$ letters equal to $+1$
(without the requirement that $a < b$),
and let $\fstat (s):= \inv (s):=\#\{i<j: s_{i}>s_{j}\}$.
For fixed $i<j$, the number of $s$ in $\cS$ with $s_{i}>s_{j}$
(i.e., with $s_i=1$ and $s_j=-1$) is ${{n-2} \choose {a-1}}$,
so the probability that an $s$ chosen uniformly at random from $\cS$
satisfies $s_i > s_j$ is 
${{n-2} \choose {a-1}}/{n \choose a} = \frac{ab}{n(n-1)}$;
hence by additivity of expectation, the expected value of $\inv(s)$ is
$c = \sum_{i<j} \frac{ab}{n(n-1)} = \frac{n(n-1)}{2} \frac{ab}{n(n-1)} = ab/2$.
Indeed, $\fstat$ is $c$-mesic under the action 
of the involution on $\cS$ that reverses the order of the letters
(which gives us another way to compute the expected value of $\inv(s)$).
Here we give a less trivial example of homomesy.

\begin{proposition}
\label{prop:bits}
Let $\tau$ be the left-shift $C_L$ on $\cS$ and $\fstat (s):= \inv (s)$
as above. Then over each orbit
$\mathcal{O}$ we have 
\[
\frac{1}{\#\cal{O}} \sum_{s \in \cal{O}} \fstat (s) = 
\frac{ab}{2}=\frac{1}{\#\cal{S}} \sum_{s \in \cal{S}} \fstat (s).
\]
In other words, the inversion statistic is $c$-mesic under the action of
cyclic rotation, with $c=ab/2$.
\end{proposition}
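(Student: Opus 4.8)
The plan is to reduce the (quadratic) inversion count to a (linear) sum of positions, for which the cyclic average is transparent. First I would record the elementary identity
\[
\inv(s) = \sigma(s) - \binom{a}{2},
\]
where $\sigma(s)$ denotes the sum of the positions, labeled $0,1,\dots,n-1$, of the $a$ letters equal to $-1$ in $s$. This follows by fixing a letter $-1$ at position $m$: exactly $m$ letters precede it, and splitting those into earlier $+1$'s and earlier $-1$'s and then summing over all $a$ copies of $-1$ shows that the ``earlier $+1$'' counts total $\inv(s)$ (each such occurrence is precisely a $+1$ before a $-1$), while the ``earlier $-1$'' counts total $\binom{a}{2}$.

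Next I would observe that it suffices to average $\fstat$ over the full list $s, C_L s, C_L^2 s, \dots, C_L^{n-1}s$ of all $n$ cyclic shifts of an arbitrary $s \in \cS$. If the orbit $\cO$ through $s$ has size $d$ (necessarily a divisor of $n$), then this list traverses $\cO$ exactly $n/d$ times, so its average equals the orbit average over $\cO$. Thus the proposition reduces to showing that $\frac{1}{n}\sum_{k=0}^{n-1}\inv(C_L^k s) = ab/2$ for every $s$.

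The crux is then to evaluate $\frac{1}{n}\sum_{k=0}^{n-1}\sigma(C_L^k s)$. The key observation is that a single fixed letter equal to $-1$, sitting at original position $j$, occupies position $(j-k)\bmod n$ in $C_L^k s$, so as $k$ runs through $0,1,\dots,n-1$ it visits each of the $n$ positions $0,1,\dots,n-1$ exactly once; hence it contributes $0+1+\cdots+(n-1)=\binom{n}{2}$ to $\sum_{k}\sigma(C_L^k s)$. Summing over the $a$ letters equal to $-1$ gives $\sum_{k=0}^{n-1}\sigma(C_L^k s)=a\binom{n}{2}$, so the average of $\sigma$ over the $n$ shifts is $a(n-1)/2$. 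Combining this with the identity from the first step, the orbit average of $\inv$ is
\[
\frac{a(n-1)}{2} - \binom{a}{2} = \frac{a}{2}\bigl((n-1)-(a-1)\bigr) = \frac{a(n-a)}{2} = \frac{ab}{2},
\]
as desired; since this value is independent of the orbit, $\fstat$ is $(ab/2)$-mesic, and it agrees with the global average computed above.

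The real work is done by the reformulation $\inv(s)=\sigma(s)-\binom{a}{2}$, which converts a quadratic count of inversions into a linear function of positions whose cyclic average can be computed letter-by-letter; finding this reformulation is the one nonroutine step. The only genuine subtlety thereafter is the bookkeeping in the reduction: the clean ``each letter visits each position once'' count is valid only over a complete block of $n$ consecutive shifts, so I must average over all $n$ shifts (covering $\cO$ with multiplicity $n/d$) rather than over the $d$ distinct orbit elements, and divide accordingly. Everything else is a direct calculation.
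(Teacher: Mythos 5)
Your proof is correct and coincides with the paper's own second proof of this proposition (the one given ``in keeping with the equivariant bijection philosophy''): your identity $\inv(s)=\sigma(s)-\binom{a}{2}$ is precisely the paper's statistic $\tilde{\fstat}(\tilde{s})=\bigl(\sum_{i\in\tilde{s}}i\bigr)-\frac{a(a+1)}{2}$ with positions indexed from $0$ instead of $1$, and your passage to all $n$ cyclic shifts is exactly the paper's superorbit argument, with each $-1$ visiting every position exactly once so that $\sigma$ averages to $a(n-1)/2$. So this is essentially the same approach as the paper, differing only in the indexing convention.
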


One way to prove Proposition~\ref{prop:bits} is
to rewrite the indicator function of $(s_{i},s_{j})$ being an inversion pair as
$\frac{1}{4}(1+s_{i})(1-s_{j})$.  Then
\begin{eqnarray*}
\fstat(s) & = & \sum_{i<j} (1+s_i)(1-s_j)/4
= \frac14 \sum_{i<j} (1 + s_i - s_j - s_i s_j) \\
& = & \frac14 \left( 
\sum_{i<j} 1 + \sum_{i<j} s_i - \sum_{i<j} s_j - \sum_{i<j} s_i s_j \right)\,.
\end{eqnarray*}
In the final expression, the first and fourth sums are independent of $s$, 
since for all $s$, $\sum_{i<j} 1$ is $\frac{n(n-1)}{2}$
and $\sum_{i<j} s_i s_j$ is 
$$\left( \frac{a(a-1)}{2}+\frac{b(b-1)}{2} \right)(+1)
+\left( ab \right) (-1) = \frac{n(n-1)}{2} - 2ab,$$
so $$\fstat(s) = \frac14 \left( 2ab + \sum_{i<j} s_i - \sum_{i<j} s_j \right).$$
Since the average value of $\sum_{i<j} s_i$ over each cyclic orbit 
equals the average value of $\sum_{i<j} s_j$ over that orbit\footnote{One
way to see it is to count how often a given $s_k$ occurs when we sum
the sums $\sum_{i<j} s_i$ over a given cyclic orbit. It is easy to
see that $s_k$ occurs $0$ times in one such sum, $1$ times in another,
$2$ times in another, etc., for a total of $0 + 1 + \ldots + (n-1)$
times; but the same can be said of the sum $\sum_{i<j} s_j$.},
these terms cancel, 
so that the average value of $\fstat$ over each orbit is $ab/2$.

In the particular case $a=b=2$, the six-element set $\cS$ decomposes into two
orbits, shown in Figure~\ref{fig:pm42}.  (Here we recode the elements
of $\cS$ as ordinary bit-strings, representing $+1$ and $-1$ by 1 and 0,
respectively.) As frequently happens, not all
orbits are the same size.  But one may also view the orbit of size 2
as being part of a ``superorbit'' of size 4, cycling through the same set of
elements twice.

A different proof (in keeping with 
the ``equivariant bijection philosophy'' discussed 
in subsection~\ref{subsec-equivariant})
associates with each $s\in \cS$
the set $\tilde{s} \subseteq \{1,2,\dots,n\}$
consisting of the positions $1 \leq i \leq n$ for which $s_i = -1$.
The collection $\tilde{\cS}$ of such sets $\tilde{s}$ is precisely
the set of $a$-element subsets of $\{1,2,\dots,n\}$.
Then the action of $\tau$ on $\cS$ is isomorphic to
the action of $\tilde{\tau}$ on $\tilde{\cS}$,
where applying $\tilde{\tau}$ to $\tilde{s}$
decrements each element by 1 mod $n$.
Likewise, the inversion statistic $\fstat$ on $\cS$
corresponds to the statistic $\tilde{\fstat}$ on $\tilde{\cS}$, where
$$\tilde{\fstat}(\tilde{s}) 
=  \left( \sum_{i \in \tilde{s}} i \right) - \left( 1+2+\cdots+a \right)
= \left( \sum_{i \in \tilde{s}} i \right) - \frac{a(a+1)}{2}.$$
Although the orbits of this action can have different sizes, 
each must be of size $d$ where $d\mid n$.  
So we can repeat such an orbit $n/d$ times to form a superorbit of
length $n$, which has the same average for any statistic as the original orbit.
Now each of the $a$ members of the set $\tilde{s}$
takes on each value in $\{1,2,\dots,n\}$ 
over the $\tilde{\tau}$-superorbit of $\tilde{s}$,
so that
$$\sum_{i=0}^{n-1} \tilde{f}(\tilde{\tau}^i \tilde{s})
= a(1+2+\cdots+n) - n \, \frac{a(a+1)}{2}
= \frac{an(n+1)}{2} - \frac{an(a+1)}{2}$$
and
$$\frac{1}{n} \sum_{i=0}^{n-1} \tilde{f}(\tilde{\tau}^i \tilde{s})
= \frac{a(n+1)}{2} - \frac{a(a+1)}{2} = \frac{a(n-a)}{2}.$$
It follows that $(\tilde{\cS},\tilde{\tau},\tilde{\fstat})$, 
along with $(\cS,\tau,\fstat)$,
is $c$-mesic with $c = a(n-a)/2 = ab/2$.

A third way to prove Proposition~\ref{prop:bits} is to derive it from our Theorem
\ref{promotion-ideals}; see Remark~\ref{rem:corprop-bits}.

\begin{figure}[t]
$$
\begin{array}{ccccccccccc}
\dots & \ra &  0011 & \ra & 0110 & \ra & 1100 & \ra &  1001 & \ra & \dots \\
      &     &  \da  &     & \da  &     & \da  &     &  \da  &     &  \\
      &     &   0   &     &   2  &     &   4  &     &    2  &     &  \\
      &     &       &     &      &     &      &     &       &     &  \\
      &     & \dots & \ra & 1010 & \ra & 0101 & \ra & \dots &     &  \\
      &     &       &     & \da  &     & \da  &     &       &     &  \\
      &     &       &     &   3  &     &   1  &     &       &     &
\end{array}
$$
\caption{The two orbits of the action of the cyclic shift
on binary strings consisting of two 0's and two 1's.
The average value of the inversion statistic 
is $(0+2+4+2)/4 = 2$ on the orbit of size 4
and $(3+1)/2 = 2$ on the orbit of size 2.}
\label{fig:pm42}
\end{figure}

\subsection{Linear actions on vector spaces}
\label{subsec-linear}

Let $V$ be a (not necessarily finite-dimensional) vector space
over a field $K$ of characteristic zero,
and define $\fstat(v)=v$
(that is, our ``statistic'' is just the identity function).
Let $T: V \rightarrow V$
be a linear map such that $T^n = I$ (the identity map on $V$)
for some fixed $n \geq 1$.
Say $v$ is \textit{invariant} under $T$ if $Tv=v$, 
and \textit{0-mesic} under $T$ if $(v+Tv+\cdots+T^{n-1}v)/n=0$.

\begin{proposition}
Every $v \in V$ can be written uniquely
as the sum of an invariant vector $\overline{v}$
and a 0-mesic vector $\hat{v}$. 
\end{proposition}

\begin{proof}
One can check that $v = \overline{v} + \hat{v}$ is such a decomposition,
with $\overline{v} = (v+Tv+\cdots+T^{n-1}v)/n$ and $\hat{v} = v - \overline{v}$,
and no other such decomposition is possible because that would yield a 
nonzero vector that is both invariant and 0-mesic, which does not exist. 
\end{proof}

In representation-theoretic terms, we are applying symmetrization to $v$
to extract from it the invariant component $\overline{v}$ associated with 
the trivial representation of the cyclic group, and the homomesic (0-mesic)
component $\hat{v}$ consists of everything else.

One suggestive way of paraphrasing the above is:
Every element of the kernel of $I-T^n = (I-T)(I+T+T^2+\dots+T^{n-1})$
can be written uniquely as the sum of an element of the kernel of $I-T$
and an element of the kernel of $I+T+T^2+\dots+T^{n-1}$.

This picture relates more directly to our earlier definition
if we use the dual space $V^*$ of linear functionals on $V$
as the set of statistics on $V$.  As a concrete example,
let $V = \bR^n$ and let $T$ be the cyclic shift of coordinates
sending $(x_1,x_2,\ldots,x_n)$ to $(x_n,x_1,\ldots,x_{n-1})$.  
The $T$-invariant functionals form a 1-dimensional subspace of $V^*$
spanned by the functional $(x_1,x_2,\ldots,x_n) \mapsto x_1+x_2+\cdots+x_n$,
while the 0-mesic functionals form an $(n-1)$-dimensional subspace of $V^*$
spanned by the $n-1$ functionals
$(x_1,x_2,\ldots,x_n) \mapsto x_i-x_{i+1}$ (for $1 \leq i \leq n-1$).
Also, we can consider the ring $\bR[x_1,\ldots,x_n]$ of polynomial functions 
$p(x_1,x_2,\dots,x_n)$ on $\bR^n$;
this ring, viewed as a vector space over $\bR$,
can be written as the direct sum of 
the subspace of polynomials that are invariant under the action of $T$
and the subspace of polynomials that are 0-mesic under the action of $T$.

\subsection{A circle action}
\label{subsec-circle}

Let $\cS$ be the set of (real-valued) functions $f(t)$ 
satisfying the differential equation $f''(t) + f(t) = 0$, 
that is, the set of functions of the form $f(t) = A \sin (t - \phi)$,
where $A$ is the amplitude and $\phi$ is the initial phase.
Then we have $f'' = -f$, $f''' = -f'$, $f'''' = f$, etc.,
so that every polynomial function of
$f, f', f'', f''', f'''', \dots$,
can be written as a polynomial in $f$ and $f'$.
Evolving $f$ in time is tantamount to shifting the phase $\phi$.

Given an element $p(x,y)$ of the ring $\bR[x,y]$,
we will say $p$ is \textbf{invariant under time-evolution} 
(or, more compactly, that $p$ is an \textbf{invariant})
if $\frac{d}{dt} \, p(f(t),f'(t)) = 0$ for all $f$ in $\cS$,
and $\mathbf{c}\,$\textbf{-mesic} 
if $\frac{1}{2\pi} \int_0^{2\pi} p(f(t),f'(t))\,dt = c$  for all $f$ in $\cS$.
For example, $x^2+y^2$ is invariant and $x$ and $y$ are 0-mesic;
one can think of the first quantity as the total energy of a harmonic oscillator
and the second and third as the mean displacement and mean velocity.

We can give a basis for $\bR[x,y]$, viewed as a vector space $V$ over $\bR$,
consisting of the nonnegative powers of $x^2+y^2$
(which jointly span the subspace of $V$
consisting of all polynomials that are invariant under time-evolution), 
along with the functions $x$, $y$, $xy$, $x^2-y^2$, 
$x^3$, $x^2 y$, $x y^2$, $y^3$, etc.\ 
(which jointly span the 0-mesic subspace of $V$).

\begin{proposition}
\label{prop:circle}
Let $V_n$ be the $(n+1)$-dimensional vector subspace of $\bR[x,y]$
spanned by the monomials $x^a y^b$ with $a+b=n$.
When $n$ is odd, all of $V_n$ is 0-mesic.
When $n$ is even, $V_n$ can be written as the direct sum
of an $n$-dimensional subspace of 0-mesic functions
and a 1-dimensional subspace of functions
that are invariant under time-evolution.
\end{proposition}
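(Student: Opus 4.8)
The plan is to reduce everything to the elementary observation that, along a solution $f(t)=A\sin(t-\phi)$, the pair $(f(t),f'(t))=(A\sin(t-\phi),A\cos(t-\phi))$ traces the circle of radius $|A|$ about the origin, so time-evolution is nothing but rotation of the point $(x,y)$, and $x^2+y^2=A^2$ is constant along each orbit. First I would record that for a \emph{homogeneous} $p\in V_n$, homogeneity gives
\[
\frac{1}{2\pi}\int_0^{2\pi}p(f(t),f'(t))\,dt = A^{n}\,M_p, \qquad M_p:=\frac{1}{2\pi}\int_0^{2\pi}p(\cos\psi,\sin\psi)\,d\psi,
\]
the scalar $M_p$ being the mean value of $p$ on the unit circle. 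For $n\ge 1$ the factor $A^{n}$ is genuinely nonconstant in the amplitude $A$, so this average can be independent of $f$ only when $M_p=0$, in which case it equals $0$. Hence on $V_n$ (with $n\ge1$; the case $n=0$ is trivial since $V_0$ consists of constants) the three properties ``$c$-mesic for some $c$'', ``$0$-mesic'', and ``$M_p=0$'' coincide, which already reduces the proposition to computing the linear functional $M\colon V_n\to\bR$ and identifying the invariants.

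Next I would diagonalize the rotation action by passing to the complex coordinate $z=x+iy$ and the basis $\{z^{j}\bar z^{k}:j+k=n\}$ of the complexification $V_n\otimes\bC$. Rotation by $\psi$ sends $z^{j}\bar z^{k}\mapsto e^{i(j-k)\psi}z^{j}\bar z^{k}$, and evaluating on the unit circle $z=e^{i\psi}$ gives $M_{z^{j}\bar z^{k}}=\frac{1}{2\pi}\int_0^{2\pi}e^{i(j-k)\psi}\,d\psi=\delta_{jk}$. Thus a basis vector has nonzero circle-mean exactly when $j=k$, which forces $n=2j$ to be even and singles out the unique (up to scalar) rotation-invariant $z^{j}\bar z^{j}=(x^2+y^2)^{n/2}$; every other basis vector is $0$-mesic. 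The value $M_{(x^2+y^2)^{n/2}}=1$ also shows that $p\mapsto M_p\,(x^2+y^2)^{n/2}$ is an idempotent on $V_n$.

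From this the two cases fall out. When $n$ is odd there is no $j=k$ with $j+k=n$, so every basis vector $z^{j}\bar z^{k}$, and hence all of $V_n$, is $0$-mesic. When $n$ is even, the projection $p\mapsto M_p\,(x^2+y^2)^{n/2}$ maps $V_n$ onto the line of invariants $\bR\,(x^2+y^2)^{n/2}$ with kernel the $0$-mesic subspace spanned by the $z^{j}\bar z^{k}$ with $j\ne k$; since there are $n+1$ index pairs and exactly one with $j=k$, this subspace has dimension $n$, and the sum is direct because an invariant of positive degree that is also $0$-mesic vanishes on the unit circle, hence (being homogeneous) identically. The one point demanding care is that this diagonalization is a priori over $\bC$ whereas the statement concerns a \emph{real} decomposition: here I would note that $M$ is a real functional and that complex conjugation permutes the eigenbasis, swapping $z^{j}\bar z^{k}$ with $z^{k}\bar z^{j}$, so that for $j\ne k$ the $0$-mesic eigenvectors assemble into real two-dimensional rotation-blocks and the $0$-mesic subspace is genuinely the complexification of a real subspace of the claimed dimension. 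As a sanity check, in degree $2$ this recovers the invariant $x^2+y^2$ together with the $0$-mesic pair $xy,\ x^2-y^2$ listed in the text.
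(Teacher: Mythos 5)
Your proof is correct, but it takes a genuinely different route from the paper's. The paper works entirely with the real monomial basis: the involutions $(x,y)\mapsto(-x,y)$ and $(x,y)\mapsto(x,-y)$ show that $\int_{S^1} x^a y^b = 0$ whenever $a$ or $b$ is odd, while for $a,b$ both even this integral is a positive constant $c_{a,b}$; the $n$-dimensional space of 0-mesies is then spanned by the odd-exponent monomials together with the differences $(1/c_{a,b})x^a y^b - (1/c_{n,0})x^n y^0$, and directness is obtained by noting that a function both invariant and homomesic must be constant, plus a dimension count. You instead diagonalize the rotation action over $\bC$: in the eigenbasis $z^j\bar z^k$, Fourier orthogonality gives $M_{z^j\bar z^k}=\delta_{jk}$, so the circle-mean functional $M$ annihilates every eigenvector except $(x^2+y^2)^{n/2}$, and the decomposition is precisely kernel versus image of the idempotent $p\mapsto M_p\,(x^2+y^2)^{n/2}$, with conjugation-stability handling the descent from $\bC$ to a real decomposition. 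Your approach buys a more canonical statement: it shows the invariants in $V_n$ are \emph{exactly} the multiples of $(x^2+y^2)^{n/2}$ and the 0-mesic subspace is exactly $\ker M$, whereas the paper only exhibits one invariant line and one complementary 0-mesic subspace; the cost is the complexification step, which the paper's more elementary argument avoids while also producing an explicit real basis of 0-mesies. Two further points in your favor: you make explicit the homogeneity reduction (the average over the orbit of amplitude $A$ equals $A^n M_p$), which the paper uses only implicitly in passing from unit-circle integrals to 0-mesy over all orbits, and your opening observation that $c$-mesic, 0-mesic, and $M_p=0$ coincide on $V_n$ for $n\geq 1$ is a mild sharpening not stated in the paper.
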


\begin{proof}
Define $\int_{S^{1}} p(x,y) = 
\frac{1}{2\pi} \int_0^{2\pi} p(\cos t,\ \sin t)\:dt$, 
where $S^{1}$ is the unit circle in $\bR^{2}$.  
Consider the monomial $x^a y^b$ with $a+b=n$.
If $a$ (resp.\ $b$) is odd, 
the involution $(x,y) \mapsto (-x,y)$ (resp.\ $(x,-y)$) shows that 
$\int_{S^{1}} x^a y^b = 0$ (using merely that $\sin$ is odd and $\cos$ is even).
If $a$ and $b$ are both even,
then $\int_{S^{1}} x^a y^b$ is some positive number $c_{a,b}$.
Now let $a,b$ vary subject to $a+b=n$.
If $n$ is odd, then $x^a y^b$ is 0-mesic
for all $a,b$ with $a+b=n$ (since at least one of $a,b$ is odd),
so all of $V_n$ is 0-mesic.
If $n$ is even, then for $a,b$ even,
$(1/c_{a,b}) x^a y^b - (1/c_{n,0}) x^{n} y^{0}$ is 0-mesic,
and these functions span an $(n/2)$-dimensional space;
adding in the 0-mesic functions $x^a y^b$ with $a,b$ odd (and $a+b=n$),
we get an $n$-dimensional space of 0-mesies.

Finally, we must verify that 
the $n$-dimensional space of 0-mesies linearly complements 
the 1-dimensional space of invariants spanned by $(x^2+y^2)^{n/2}$.
First we note that (as in subsection~\ref{subsec-linear})
every function that is both invariant (under time-evolution)
and homomesic must be constant;
for, any polynomial function $p(\cdot,\cdot)$
such that the value of $p(A \cos t, B \sin t)$
is independent of $t$ (invariance)
and independent of $A$ and $B$ (homomesy)
must be constant.
It follows that the only function in $V_n$
that is both invariant and 0-mesic is the constant function 0.
Hence the subspace of $V_n$ spanned by 0-mesies
and the subspace of $V_n$ spanned by invariants are linearly disjoint.
Complementarity then follows from a dimension-count.
\end{proof}

Here, as in the preceding section, we get
a clean complementarity between invariance and homomesy.
That is, every element in $\bR[x,y]$ can be written uniquely 
as the sum of an invariant element and a 0-mesic element.
One way to see this abstractly is to introduce
an action of the circle-group on $\bR[x,y]$
that is compatible with the action of the circle-group on $\cS$
and our interpretation of $\bR[x,y]$ as a space of functions.
Specifically, define $(T_\theta) p(x,y) = 
p(x \cos \theta - y \sin \theta, x \sin \theta + y \cos \theta)$,
and let $L$ be the linear map that sends $f \in \bR[x,y]$ to $g$
where $g(x,y)$ is the average of $T_\theta f(x,y)$ 
as $\theta$ varies of $[0,2\pi]$.
Then the subspace $V_h$ of homomesies is the kernel of $L$
and the subspace $V_i$ of invariants is the image of $L$.

Also, we can look at the linear maps $\phi$ from $\bR[x,y]$ to $\bR$ 
that are are unaffected by this action,
in the sense that $\phi \circ T_t = \phi$ for all $t$.
In this context we might call $V_i$ the ``coinvariant kernel'', 
since $V_i$ is the mutual kernel
of all invariant linear maps $\phi$ from $\bR[x,y]$ to $\bR$.

\subsection{5-cycles}\label{ssec:five}

Let $U$ be the set of all $(x,y)$ in $\bR^2$ with
$x$, $y$, $x+1$, $y+1$, and $x+y+1$ all nonzero.
The map $\tau:U \rightarrow U$ sending $(x,y)$ to $(y,(y+1)/x)$ has order 5. 
We can recursively define a sequence 
$(x_{1},x_{2}, \dotsc  )$ by $x_{1}:=x$, $x_{2}:=y$ and 
the (Lyness) recurrence $x_{i-1} x_{i+1} = x_i + 1$, 
so that $\tau (x_{i-1},x_{i}) = (x_{i}, x_{i+1})$.  
This sequence turns out to have period 5, thereby giving rise to the
\textbf{Lyness 5-cycle}
\[
x \leadsto y \leadsto (y+1)/x \leadsto (x+y+1)/xy 
\leadsto (x+1)/y \leadsto x \,.
\]
This is associated with the $A_2$ cluster algebra,
e.g., by way of four-rowed frieze patterns.
(One accessible article on frieze patterns is~\cite{Pro08},
although it lacks references to many relevant articles
written in the past decade.)
Let $\fstat((x,y)) = \log |h(x)|$
where $h(z) = z^{-1} + z^{-2}$ is well-defined and nonzero throughout $U$.

\begin{proposition}
\label{prop:lyness}
The function $\fstat$ is 0-mesic under the action of $\tau$ on $U$.
\end{proposition}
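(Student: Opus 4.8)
The plan is to exploit the fact that $\tau$ has order $5$, so that summing $\fstat$ around a full period telescopes. Writing $(x_1,x_2,\dots)$ for the Lyness sequence with $x_1=x$, $x_2=y$, the five forward iterates $\tau^0(x,y),\dots,\tau^4(x,y)$ have first coordinates $x_1,x_2,x_3,x_4,x_5$, so that
\[
\sum_{i=0}^{4}\fstat\bigl(\tau^i(x,y)\bigr)=\sum_{k=1}^{5}\log\lvert h(x_k)\rvert=\log\Bigl\lvert\prod_{k=1}^{5}h(x_k)\Bigr\rvert .
\]
For an orbit $\cO$ of any size $d\mid 5$, these five terms count each point of $\cO$ exactly $5/d$ times, so the orbit-sum of $\fstat$ equals $\tfrac{d}{5}$ times the displayed quantity. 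Hence it suffices to prove the single algebraic identity $\prod_{k=1}^{5}h(x_k)=1$; this forces the orbit-sum, and therefore the orbit-average, to vanish for every orbit at once (fixed points included).

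The key step is to factor $h$ so that the Lyness recurrence can be applied. Since $h(z)=z^{-1}+z^{-2}=(1+z)/z^2$, we have
\[
\prod_{k=1}^{5}h(x_k)=\frac{\prod_{k=1}^{5}(1+x_k)}{\bigl(\prod_{k=1}^{5}x_k\bigr)^{2}} .
\]
Now I would invoke the recurrence in the form $1+x_i=x_{i-1}x_{i+1}$, with indices read cyclically modulo $5$ (using $x_6=x_1$ and $x_0=x_5$). Substituting this into the numerator and reindexing the two resulting products gives
\[
\prod_{k=1}^{5}(1+x_k)=\prod_{k=1}^{5}x_{k-1}\,x_{k+1}=\Bigl(\prod_{k=1}^{5}x_k\Bigr)\Bigl(\prod_{k=1}^{5}x_k\Bigr)=\Bigl(\prod_{k=1}^{5}x_k\Bigr)^{2},
\]
which is exactly the denominator. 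Thus $\prod_{k=1}^{5}h(x_k)=1$, and the period-sum is $\log 1=0$, proving $0$-mesy.

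There is no real obstacle in this argument; the only points requiring care are bookkeeping ones. First, I should confirm that the five cyclic instances of $1+x_i=x_{i-1}x_{i+1}$ genuinely hold for the explicit orbit $x\leadsto y\leadsto (y+1)/x\leadsto (x+y+1)/xy\leadsto (x+1)/y$, which reduces to a direct check of each relation against these formulas. Second, I should note that the reindexing is valid precisely because the orbit returns to its start after five iterations, so each shifted product $\prod x_{k-1}$ and $\prod x_{k+1}$ ranges over the same five factors $x_1,\dots,x_5$ as $\prod x_k$. The reduction in the first paragraph then converts this period-sum identity uniformly into the $0$-mesy statement, with the superorbit interpretation handling any orbit whose size properly divides $5$.
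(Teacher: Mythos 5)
Your proposal is correct and follows essentially the same route as the paper's proof (due to Andy Hone): writing $h(z)=(1+z)/z^2$, applying the Lyness recurrence $x_{i-1}x_{i+1}=x_i+1$ cyclically mod $5$, and cancelling to get $\prod_{k=1}^{5}h(x_k)=1$, hence a zero period-sum of $\log\lvert h\rvert$. The only difference is that you spell out the bookkeeping (the telescoping over a period of $5$ and the superorbit handling of orbits of size dividing $5$) that the paper leaves implicit.
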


\begin{proof} (Andy Hone)
Using the fact that $x_{i-1} x_{i+1} = x_i + 1$ 
with all subscripts interpreted mod 5 (this is just the Lyness recurrence),
we write the product $h(x_1) h(x_2) h(x_3) h(x_4) h(x_5)$ as
$\prod ( x_i + 1 ) / x_i^2 = \prod x_{i+1} x_{i-1} / x_i^2$,
and the numerator and denominator factors all cancel,
showing that the product is 1.
\end{proof}

Applying the map $z \mapsto z/(1+z)$ to the Lyness 5-cycle
we obtain the ``Bloch 5-cycle'' 
\[
x\leadsto y\leadsto (1-x)/(1-xy)\leadsto 1-xy\leadsto (1-y)/(1-xy) \leadsto x
\]
satisfying the recurrence $x_{i-1} + x_{i+1} = x_{i-1} x_i x_{i+1} + 1$.
For example, the Lyness 5-cycle\\[2pt]  
$\ds \left(1,3,4,\frac{5}{3},\frac{2}{3}\right)$ maps to the Bloch 5-cycle
$\ds \left(\frac{1}{2}, \frac{3}{4}, \frac{4}{5}, 
\frac{5}{8}, \frac{2}{5}\right)$.\\[2pt]  

If we let $U'$ be the set of all $(x,y)$ in $\bR^2$ 
with $x,y \not\in \{0,1\}$ and $xy \neq 1$,
then the map that sends $(x,y)$ to $(y,(1-x)/(1-xy))$
is an order-5 map from $U'$ to itself,
and the Bloch-Wigner function on $\bC \setminus \{0,1\}$
(a variant of the dilogarithm function; see~\cite{W14})
is 0-mesic under this action.

\subsection{Sandpile dynamics}\label{ssec:sand}

Let $G$ be a finite directed graph with vertex set $V$.
For $v \in V$ let $\outdeg(v)$ be 
the number of directed edges emanating from $v$,
and for $v,w \in V$ let $\deg(v,w)$ be
the number of directed edges from $v$ to $w$
(which we will permit to be larger than 1, even when $v=w$).
Define the {\em combinatorial Laplacian} of $G$ as the matrix $\Delta$
(with rows and columns indexed by the vertices of $V$)
whose $v,v$th entry is $\outdeg(v)-\deg(v,v)$
and whose $v,w$th entry for $v \neq w$ is $-\deg(v,w)$.
Specify a vertex $t$ with the property that 
for all $v \in V$ there is a forward path from $v$ to $t$, 
called the {\em global sink};
let $V^- = V \setminus \{t\}$,
and let $\Delta'$ (the reduced Laplacian) be the matrix $\Delta$
with the row and column associated with $t$ removed.
By the Matrix-Tree theorem, $\Delta'$ is nonsingular.
A {\em sandpile configuration} on $G$ (with sink at $t$)
is a function $\sigma$ from $V^-$ to the nonnegative integers.
(For more background on sandpiles, see Holroyd, Levine, M\'esz\'aros,
Peres, Propp, and Wilson~\cite{HLMPPW08}.)
We say $\sigma$ is {\em stable} 
if $\sigma(v) < \outdeg(v)$ for all $v \in V^-$.
For any sandpile configuration $\sigma$,
Dhar's least-action principle for sandpile dynamics 
(see Levine and Propp~\cite{LP10}) tells us
that the set of nonnegative-integer-valued functions $u$ on $V^-$ 
such that $\sigma-\Delta' u$ is stable
has a unique minimal element $\phi=\phi(\sigma)$ 
in the natural (pointwise) ordering;
we call $\phi$ the {\em firing vector} for $\sigma$
and we call $\sigma-\Delta' \phi$ the {\em stabilization} of $\sigma$,
denoted by $\sigma^\circ$.
If we choose a {\em source vertex} $s \in V^-$,
then we can define an action on sandpile configurations
via $\tau(\sigma) = (\sigma+1_s)^\circ$,
where $1_v$ denotes the function that takes the value 1 at $v$ and 0 elsewhere.
Say that $\sigma$ is {\em recurrent} (relative to $s$) 
if $\tau^m(\sigma) = \sigma$ for some $m > 0$.  
(This notion of recurrence is slightly weaker than that of
\cite{HLMPPW08}; they are equivalent when every vertex is reachable
by a path from $s$.)
Then $\tau$ restricts to an invertible map
from the set of recurrent sandpile configurations to itself.
Let $f(\sigma) = \phi(\sigma+1_s)$.
Since $\tau(\sigma) = \sigma + 1_s - \Delta' f(\sigma)$
we have $\tau(\sigma) - \sigma = 1_s - \Delta' f(\sigma)$;
if we average this relation over all $\sigma$ in a particular $\tau$-orbit,
the left side telescopes, giving $0 = 1_s - \Delta' \overline{f}$,
where $\overline{f}$ denotes the average of $f$ over the orbit.
Hence:

\begin{proposition}
\label{prop:fvhm}
Under the action of $\tau$ on recurrent sandpile configurations
described above,
the function $f: \sigma \mapsto \phi(\sigma+1_s)$ is homomesic,
and its orbit-average is the function $f^*$ on $V^-$
such that $\Delta' f^* = 1_s$ (unique because $\Delta'$ is nonsingular).
\end{proposition}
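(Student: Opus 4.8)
The plan is to exploit the linear recursion relating $\tau$ to the firing vector, turning the homomesy claim into a telescoping sum followed by an application of the nonsingularity of $\Delta'$. First I would record the defining identity. By the definition of stabilization we have $\sigma^\circ = \sigma - \Delta' \phi(\sigma)$, and since $f(\sigma) = \phi(\sigma + 1_s)$ and $\tau(\sigma) = (\sigma + 1_s)^\circ$, applying this to the configuration $\sigma + 1_s$ gives
\[
\tau(\sigma) = \sigma + 1_s - \Delta' f(\sigma),
\qquad\text{equivalently}\qquad
\tau(\sigma) - \sigma = 1_s - \Delta' f(\sigma),
\]
an identity of integer-valued functions on $V^-$ (valid because Dhar's least-action principle guarantees that $\phi$, and hence $f$, is well-defined).

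Next I would fix a single $\tau$-orbit $\cO$. Because $\tau$ restricts to an invertible map on the set of recurrent configurations, with all orbits finite, $\cO$ is a finite cycle on which $\tau$ acts as a bijection. Summing the identity above over $\sigma \in \cO$, the left-hand side $\sum_{\sigma \in \cO}(\tau(\sigma) - \sigma)$ vanishes, since $\tau$ merely permutes the summands; the right-hand side becomes $\#\cO \cdot 1_s - \Delta' \sum_{\sigma \in \cO} f(\sigma)$. Dividing by $\#\cO$ and writing $\overline{f}$ for the orbit-average of $f$ yields
\[
\Delta' \overline{f} = 1_s .
\]

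Finally, since $\Delta'$ is nonsingular (Matrix-Tree theorem), this equation determines $\overline{f}$ uniquely as $\overline{f} = (\Delta')^{-1} 1_s = f^*$, a vector independent of the chosen orbit. This is exactly the assertion that $f$ is homomesic --- in the vector-valued sense the paper permits, with $f$ taking values in the space of real functions on $V^-$ --- with common orbit-average $f^*$, which completes the argument.

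There is no serious obstacle here: the content is a one-line telescoping together with elementary linear algebra. The only points demanding care are verifying that the displayed identity $\tau(\sigma) - \sigma = 1_s - \Delta' f(\sigma)$ genuinely holds (this rests on the least-action characterization of the firing vector and on stabilization being subtraction of $\Delta'$ times that vector), and observing that \emph{homomesic} must be read in the generalized, vector-space-valued sense, since $f^*$ is a function on $V^-$ rather than a scalar.
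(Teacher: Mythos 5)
Your proof is correct and follows essentially the same route as the paper: the identity $\tau(\sigma)-\sigma = 1_s - \Delta' f(\sigma)$, telescoping (equivalently, $\tau$ permuting the orbit) to get $\Delta'\overline{f}=1_s$, and nonsingularity of $\Delta'$ to pin down $\overline{f}=f^*$ independently of the orbit. Your added remarks on the least-action principle and the vector-valued sense of homomesy are consistent with what the paper itself assumes.
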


\begin{example}\label{eg:sand}

Figure~\ref{fig:sand}
shows an example of the $\tau$-orbits for the case where $G$ is 
the bidirected cycle graph with vertices 1, 2, 3, and 4,
with a directed edge from $i$ to $j$ iff $i-j = \pm 1$ mod 4; here 
the discrete Laplacian is 
\[
\Delta =
\left(
\begin{array}{rrrr}
 2 & -1 &  0 & -1 \\
-1 &  2 & -1 &  0 \\
 0 & -1 &  2 & -1 \\
-1 &  0 & -1 &  2 
\end{array} \right)\,.  
\]
Let the source be $s=2$ and global sink be $t=4$.
The sandpile configuration $\sigma$
is represented by the triple $(\sigma(1),\sigma(2),\sigma(3))$.
The four recurrent configurations $\sigma$ are
$(1,0,1)$, $(1,1,1)$, $(0,1,1)$, and $(1,1,0)$,
and the respective firing vectors $f(\sigma)$ are
$(0,0,0)$, $(1,2,1)$, $(0,1,1)$, and $(1,1,0)$.
The average value of the firing vector statistic $f$
is $f^*=(\frac12,1,\frac12)$ on each orbit.
Treating $f^*$ as a column vector and multiplying on the left by $\Delta'$
gives the column vector $(0,1,0)=1_{s}$:
\[
\left(
\begin{array}{rrr}
 2 & -1 &  0  \\
-1 &  2 & -1  \\
 0 & -1 &  2 
\end{array} \right)
\left(
\begin{array}{c}
1/2 \\
 1  \\
1/2 \\
\end{array} \right)
=
\left(
\begin{array}{c}
0 \\
1 \\
0 \\
\end{array} \right)\,.
\]
\end{example}

\begin{figure}[t]
$$
\begin{array}{ccccccccccc}
\dots & \ra & (1,0,1) & \ra & (1,1,1) & \ra & \dots \\
      &     &   \da   &     &   \da   &     &   \\
      &     & (0,0,0) &     & (1,2,1) &     &   \\
      &     &         &     &         &     &   \\
\dots & \ra & (0,1,1) & \ra & (1,1,0) & \ra & \dots \\
      &     &   \da   &     &   \da   &     &   \\
      &     & (0,1,1) &     & (1,1,0) &     &   \\
      &     &         &     &         &     &   \\
\end{array}
$$
\caption{The two orbits in the action of the sandpile map $\tau$
on recurrent configurations on the cycle graph of size 4,
with source at 2 and sink at 4.  There are two orbits,
each of size 2, and the average of $f$ along each orbit
is $(1/2,1,1/2)$.}
\label{fig:sand}
\end{figure}

We should mention that in this situation
all orbits are of the same cardinality.
This is a consequence of the fact that
the set of recurrent sandpile configurations
can be given the structure of a finite abelian group
(the ``sandpile group'' of $G$).
For, given any finite abelian group $G$ and any element $h \in G$,
the action of $h$ on $G$ by multiplication
has orbits that are precisely the cosets of $G/H$,
where $H$ is the subgroup of $G$ generated by $h$,
and all these cosets have size $|H|$.

Similar instances of homomesy were known for 
a variant of sandpile dynamics called rotor-router dynamics;
see Holroyd-Propp~\cite{HP10}.  
It was such instances of homomesy
that led the second author to seek instances of the phenomenon
in other, better-studied areas of combinatorics.

\subsection{Suter's action on Young diagrams} \label{ssec:suter}

In~\cite{Su02}, Suter described
an action of the dihedral group $D_n$ ($n \geq 1$)
on a particular subgraph $Y_n$ of the Hasse diagram of Young's lattice.  
This specializes to an action of the cyclic group $C_n$.
Let the {\em hull} of a Young diagram
be the smallest rectangular diagram that contains it, 
and let $Y_{n}$ be the set of all Young diagrams 
whose hulls are contained in the staircase diagram $(n-1,n-2,\dots,1)$.  
Here we will consider only the cyclic action generated by 
the invertible operation $\rho_n$ defined by Suter as follows:
Given a Young diagram $\lambda \in Y_{n}$ 
(drawn ``French'' style as rows of boxes in the first quadrant)
we discard the boxes in the bottom row (let us say there are $k$ of them),
move all the remaining boxes one step downward and to the right,
and insert a column of $n-1-k$ boxes at the left.
Suter shows that the resulting diagram $\mu$ is again in $Y_n$
and that the map $\rho_n: \lambda \mapsto \mu$ is invertible.
For example, the action of $\rho_{5}$ on
$Y_{5}$ produces the following four orbits:  
\[
\squaresize = 9pt   \thickness = 1pt 
\left( \eset\,, \quad \Young{\cr \cr \cr \cr}\,,\quad  
\Young{&  \cr & \cr  & \cr}\,, \quad 
\Young{& & \cr & & \cr}\,, \quad \Young{ & & & \cr} \right) \,, \qquad 
\left( \Young{\cr}\,, \quad \Young{\cr \cr \cr }\,,\quad  
\Young{\cr & \cr  & \cr}\,, \quad 
\Young{& \cr & & \cr}\,, \quad \Young{  & & \cr} \right) \,, 
\]
\[
\squaresize = 9pt \squaresize = 9pt   \thickness = 1pt 
\left( \Young{\cr \cr }\,, \quad \Young{\cr \cr & \cr }\,,\quad  
\Young{& \cr & \cr}\,, \quad 
\Young{\cr & & \cr}\,, \quad \Young{  & \cr} \right) \,, \qquad \left( 
\Young{\cr & \cr} \right)\,. 
\]

Figure~\ref{fig:suter} shows another example with $n=6$ and $k=2$,
where boldface black numbers correspond to boxes that get shifted
when one passes from $\lambda=(2,2,1,1)$ to $\rho_{6}(\lambda)=(3,2,2)$.
Suter shows that the map $\rho_n$ 
is an automorphism of the undirected graph $Y_n$,
and that $\rho_{n}^n$ is the identity on $Y_n$.

\begin{figure}[t]

\[
\squaresize = 16pt \thickness = 1pt
\Young{\mathbf{2} &\blank  & \blank \cr
\bf{3} & \blank   & \blank         \cr 
\bf{4} & \bf{3} & \blank $\mapsto$ \cr
\cred    5  &  \cred    4  &  \blank \cr }
\quad 
\Young{
\cred 3 & \bf{2} &\blank \cr	 
\cred 4 & \bf{3} &\blank \cr	   
\cred 5 & \bf{4} & \bf{3} \cr }
\]
	
\caption{An example of Suter's map for $n=6$ and $k=2$}
\label{fig:suter}
\end{figure}

Let $f$ be the statistic on $Y_n$
that sends each Young diagram to the sum 
of the weights of its constituent boxes,
where the box at the lower left has weight $n-1$,
its two neighbors have weight $n-2$, and so on. 
The boxes in Figure~\ref{fig:suter} have been marked with their weights,
so we can see that $f(\lambda)=5+4+4+3+3+2=21$
while $f(\rho_{6}(\lambda))=5+4+4+3+3+3+2=24$.

\begin{proposition}
\label{prop:suter}
Under the action of $\rho_{n}$ on $Y_n$,
the function $f$ is $c$-mesic
with $c = (n^3-n)/12$.
\end{proposition}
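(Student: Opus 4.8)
The plan is to reduce everything to a single local identity for how $f$ changes under one application of $\rho_n$, and then to average. Assign to the box in row $i$ and column $j$ (both indexed from $0$, French style) the weight $n-1-(i+j)$, and write $\lambda_1=\lambda_1(\lambda)$ for the length of the bottom row. The first observation is that this weight depends only on the antidiagonal $i+j$, so the middle step of Suter's move---shifting every surviving box one unit down and to the right, $(i,j)\mapsto(i-1,j+1)$---preserves the weight of each box. Hence only the two boundary steps contribute: deleting the bottom row removes boxes of weights $n-1,n-2,\dots,n-\lambda_1$, while inserting the new left column of height $n-1-\lambda_1$ adds boxes of weights $n-1,n-2,\dots,\lambda_1+1$. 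Subtracting the two sums, I would establish the key relation
\begin{equation}
\label{eq:suterlocal}
f(\rho_n\lambda)-f(\lambda)=\binom{n}{2}-n\,\lambda_1(\lambda).
\end{equation}

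The first payoff is immediate: summing \eqref{eq:suterlocal} around any $\rho_n$-orbit makes the left side telescope to $0$, so the orbit-average of $\lambda_1$ equals $(n-1)/2$. The second observation, which I would build on, is that the same bookkeeping gives $N(\rho_n\lambda)-N(\lambda)=(n-1)-2\lambda_1(\lambda)$ for the size $N(\lambda)=\#\lambda$; comparing this with \eqref{eq:suterlocal} yields $f(\rho_n\lambda)-f(\lambda)=\tfrac{n}{2}\bigl(N(\rho_n\lambda)-N(\lambda)\bigr)$, so that $h:=f-\tfrac{n}{2}N$ is \emph{invariant} under $\rho_n$. I would also record the reflection symmetry supplied by conjugation: transpose $\lambda\mapsto\lambda'$ fixes both $f$ and $N$ and satisfies $(\rho_n\lambda)'=\rho_n^{-1}(\lambda')$, so transpose acts on each orbit as a reflection, pairing the bottom-row lengths through $\lambda_1(\rho_n^{\,t}\lambda)+\lambda_1(\rho_n^{\,-1-t}\lambda)=n-1$.

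Since $h$ is invariant, the orbit-average of $f$ equals $\tfrac{n}{2}\,\overline{N}+h$, so it remains only to control $\overline{N}$ on each orbit; equivalently, since \eqref{eq:suterlocal} already fixes $\overline{\lambda_1}$, what is still missing is the second moment $\overline{\lambda_1^{\,2}}$. This is the genuine obstacle: telescoping a first-difference identity such as \eqref{eq:suterlocal} determines the orbit-average of $f$ only up to the invariant $h$, precisely because $f$ is quadratic, not linear, in the natural cyclic coordinates (indeed one checks that $f$ is \emph{not} of the form ``linear functional of the orbit position plus a function of $N$''). To supply the missing second-order input I would invoke Suter's own $D_n$-structure, passing to his equivariant model in which $\rho_n$ is realized as a genuine $n$-fold rotation; the averaging can then be carried out in the spirit of the cyclic-shift computation of \S\ref{subsec-bits}, by arranging that each cyclic position is visited equally often over a super-orbit of length $n$. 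The reflection symmetry of the previous paragraph is what makes the quadratic sum collapse.

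Finally I would pin down the constant. A direct evaluation of the global average $\tfrac{1}{2^{\,n-1}}\sum_{\lambda\in Y_n}f(\lambda)$ (which by homomesy must coincide with every orbit-average), together with the small case $n=5$---where $Y_5$ splits into orbits of sizes $5,5,5,1$ and every orbit-average of $f$ is $10$---confirms that the common value is $c=(n^3-n)/12$, completing the proof.
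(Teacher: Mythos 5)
Your preparatory steps are correct and even elegant: the local identity $f(\rho_n\lambda)-f(\lambda)=\binom{n}{2}-n\,\lambda_1(\lambda)$ checks out, telescoping does give $\overline{\lambda_1}=(n-1)/2$ on every orbit, and $h:=f-\tfrac{n}{2}N$ is indeed a $\rho_n$-invariant. But, as you yourself flag, this only reduces the problem: on each orbit $\overline{f}=\tfrac{n}{2}\overline{N}+h$, and both $\overline{N}$ and $h$ genuinely vary from orbit to orbit. For $n=6$, the orbit of $(2)$ is $\{(2),(1,1,1),(2,2,1,1),(3,2,2),(3,3),(4,1)\}$ with $\overline{N}=29/6$ and $h=3$, while the orbit of $(2,1)$ has $\overline{N}=27/6$ and $h=4$; in particular $N$ itself is \emph{not} homomesic, so the whole content of the proposition lies in showing that $\tfrac{n}{2}\overline{N}+h$ is orbit-independent --- and that step you only gesture at. Worse, the specific lemma you propose for it is false: since $(\rho_n\lambda)'=\rho_n^{-1}(\lambda')$, transposition maps the $\rho_n$-orbit of $\lambda$ to the $\rho_n$-orbit of $\lambda'$, which need not be the same orbit, so transpose does not ``act on each orbit as a reflection.'' The same $n=6$ orbit is a counterexample: it does not contain $(2)'=(1,1)$, and its cyclic sequence of bottom-row lengths $2,1,2,3,3,4$ admits no reflection of the $6$-cycle pairing values so that each pair sums to $n-1=5$ (the unique reflection sending the value $1$ to the value $4$ fixes a position of value $2$). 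So the ``reflection symmetry that makes the quadratic sum collapse'' is unavailable, and the appeal to Suter's equivariant model ``in the spirit of \S~\ref{subsec-bits}'' remains a plan, not an argument. Finally, the closing step is circular: orbit averages may be identified with the global average only \emph{after} homomesy is established, and the $n=5$ verification proves nothing for general $n$.

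The paper's proof supplies exactly the second-order input you are missing, and needs neither telescoping nor dihedral symmetry: it refines $f$ as $\tfrac12\sum_{i+j=n}f_{i,j}$, where $f_{i,j}$ is the total weight of the boxes of weight $i$ or $j$, and proves each $f_{i,j}$ is homomesic with orbit average $ij$. The mechanism is a lifetime count: weights are constant along the sliding diagonals, every box is born in the inserted left column, a box of weight $i$ born there survives exactly $j$ applications of $\rho_n$ (contributing $ij$ to the orbit sum), and the gain/loss bookkeeping shows that over a full cycle of $n$ steps exactly $n$ boxes of weight $i$ or $j$ are born; hence $\overline{f_{i,j}}=n\cdot(ij/n)=ij$, and summing gives $c=\tfrac12\sum_{i=1}^{n-1}i(n-i)=(n^3-n)/12$. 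If you want to salvage your outline, it is this pairwise refinement --- not a reflection symmetry of individual orbits --- that must play the role of your missing ``second moment'' step.
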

\noindent For example, 
the weights corresponding to each orbit of $\rho_5$ on $Y_5$ (shown above) are
\[
(0,10,15,15,10), \quad (4,9,14,14,9), \quad (7,12,12,12,7), \quad (10)\,, 
\]
each of which has average $10 = (5^3-5)/12$.  
\begin{proof} (David Einstein)
The proposition follows from a more refined assertion,
in which we take positive integers $i,j$ with $i+j=n$
and only look at the sum of weights of the boxes of weight $i$
and the boxes of weight $j$; call this $f_{i,j}$.
We claim that $f_{i,j}$ is homomesic with average $ij$.
Then since $f = (f_{1,n-1} + f_{2,n-2} + \cdots + f_{n-1,1})/2$,
it will follow that $f$ is homomesic with average
$\frac12 ((1)(n-1) + (2)(n-2) + \cdots + (n-1)(1)) = (n-1)(n)(n+1)/12 =
(n^3-n)/12$.  

Note that the diagonal slides in the definition of $\rho_{n}$
do not affect the weight of a cell,
because the weights of the cells are constant
along diagonals of slope $-1$ (see Figure~\ref{fig:suter}).
It takes $j$ diagonal sliding operations to move a cell of weight $i$ 
that starts in the first column so that it disappears,
and likewise with the roles of $i$ and $j$ reversed.
So each cell of weight $i$ or weight $j$ added in the first column 
contributes $ij/n$ to the average of $f_{i,j}$.

The definition of $\rho_n$ shows that in going from $\lambda$ to
$\rho_n(\lambda)$, we gain cells of weights $n-1,n-2,\ldots,k+1$ and lose
cells of weights $n-1,n-2,\ldots,n-k$ (where $k$ is the length of the
first row of $\lambda$). So we lose a cell of weight $j$ if and only
if we don't gain a cell of weight $i$. Thus, when we perform $\rho_n$
a total of $n$ times, the number of cells of weight $j$ lost is $n$
minus the number of cells of weight $i$ gained. But the number of
cells of weight $j$ gained is the number of cells of weight $j$ lost
(what comes in is what comes out).
This means that if $r$ cells of weight $j$ are added in a complete cycle, 
then $n-r$ cells of weight $i$ are added, 
for a total of $n$ cells of weight either $i$ or $j$.  
Thus we get an average of $n(ij/n)=ij$ 
for the sum of the weights of these cells across an orbit.
\end{proof}

It should be noted that for this and similar examples,
our notion of homomesy of cyclic actions
can be adapted in a straightforward fashion
to the action of other finite groups.
In the case of Suter symmetry,
the fact that $f$ is $c$-mesic under the action of the cyclic group
implies that $f$ is $c$-mesic under the action of the dihedral group
(since every orbit of the dihedral group is the union
of two \textit{same-size} orbits of the cyclic group).

\subsection{Rectangular Young tableaux}\label{ssec:SSYT}

For a fixed Young diagram $\lambda$,
let $\SSYT_{k}(\lambda )$ denote the set of 
\textbf{semistandard Young tableaux} of shape $\lambda$ and ceiling $k$, 
i.e., fillings of the cells of $\lambda$ with elements of $[k]$ 
which are \emph{weakly} increasing in each row 
and \emph{strictly} increasing in each column. 
(See, e.g., \cite[\S~7.10]{Sta99} for more information 
about these objects and their relationship to symmetric functions.)  
In the particular case where $\lambda = (n^{m}) : = (n,n,\dotsc ,n)$ 
is a \emph{rectangular} shape with $m$ parts, all equal to $n$, 
the Sch\"utzenberger \emph{promotion} operator $\cP$ satisfies
$\cP^{k}=\id$~\cite[Cor.~5.6]{R10}.   
(Simpler proofs are available for \emph{standard Young tableaux}; 
see e.g.\ \cite[Thm.~4.1(a)]{Sta09} and the references therein.)

Now fix any subset $R$ of the cells of $(n^m)$ 
and for $T\in \SSYT_{k}(n^m) $ set $\sigma_{R}(T)$ to be 
the sum of the entries of $T$ whose cells lie in $R$.  
\begin{theorem}[Bloom-Pechenik-Saracino]\label{thm:ssyt}
Let $k$ be a positive integer and suppose that $R\subseteq (n^m)$ 
is symmetric with respect to 180-degree rotation about the center of $(n^{m})$. 
Then the statistic $\sigma_{R}$ is $c$-mesic with respect to 
the action of promotion on $\SSYT_{k}(n^{m})$, 
with $c=|R|\left(\frac{k+1}{2}\right)$.  
\end{theorem}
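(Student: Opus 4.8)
The goal is to prove the Bloom–Pechenik–Saracino theorem: that for a centrally symmetric region $R \subseteq (n^m)$, the statistic $\sigma_R(T) = \sum_{\text{cells } c \in R} T(c)$ is $c$-mesic under promotion, with average $|R|\frac{k+1}{2}$. The plan is to reduce everything to the single-cell case and then exploit the central symmetry together with a known fact about how promotion cyclically permutes the content at each individual cell.

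First I would observe that $\sigma_R = \sum_{c \in R} \sigma_{\{c\}}$, so by linearity of the orbit-average it suffices to understand, for each individual cell $c$, the orbit-average of the single-cell statistic $T \mapsto T(c)$. Thus the whole problem collapses to understanding the average value of the entry in a fixed cell $c$ over a promotion orbit. Since promotion satisfies $\cP^k = \id$ on $\SSYT_k(n^m)$ (by the cited Corollary 5.6 of~\cite{R10}), every orbit is finite and of size dividing $k$, so orbit-averages are well-defined.

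The key structural input I would invoke is the relationship between promotion and the \emph{rotation} symmetry of rectangular tableaux under evacuation/promotion. Concretely, for rectangular SSYT there is a well-known equivariance: the cell entries transform predictably under the dihedral symmetries generated by promotion and evacuation, and in particular the single-cell statistic at $c$ and the single-cell statistic at the centrally-opposite cell $\bar c$ (the 180-degree image of $c$) are related by the complementation $T(c) \mapsto k+1 - T(\bar c)$. This is precisely where central symmetry of $R$ enters: if I pair each cell $c \in R$ with its opposite $\bar c \in R$, then on any promotion orbit the average of $T(c)$ plus the average of $T(\bar c)$ should equal $k+1$, forcing the average of $\sigma_{\{c\}} + \sigma_{\{\bar c\}}$ to be exactly $k+1$. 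Summing over the $|R|/2$ symmetric pairs (with the center cell, if present, self-paired and contributing average $\frac{k+1}{2}$) yields the claimed total $|R| \cdot \frac{k+1}{2}$.

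The main obstacle is making precise and justifying the pairing identity, namely that over a promotion orbit the average of the entry in cell $c$ and the average of the entry in the opposite cell $\bar c$ sum to $k+1$. The cleanest route is to find an involution on $\SSYT_k(n^m)$ that commutes appropriately with $\cP$ (or conjugates $\cP$ into $\cP^{-1}$) and sends the entry at $c$ to $k+1$ minus the entry at $\bar c$; the natural candidate is the composition of transpose-complementation with evacuation, or Sch\"utzenberger's evacuation itself, whose interaction with promotion on rectangles is governed by the dihedral action. I would need to verify that this involution preserves promotion orbits (or maps them to orbits of equal size) so that orbit-averages are genuinely transported, and that it realizes the complementation $T(c) \leftrightarrow k+1 - T(\bar c)$ at the level of individual cells. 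Once that equivariant involution is in hand, the homomesy follows immediately by averaging the relation $T(c) + (k+1 - T(c)) = k+1$ applied to the paired cells, with no residual computation. I expect that establishing this cell-level complementation — rather than the subsequent bookkeeping — is the genuine content of the argument.
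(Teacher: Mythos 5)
The paper does not actually prove Theorem~\ref{thm:ssyt}: it states the result and cites the proof to Bloom, Pechenik, and Saracino~\cite{BPS13}, and only verifies it on one orbit in $\SSYT_5(3^2)$. So your proposal must be judged on its own merits, and it has a genuine gap, located exactly at the step you flag as the ``main obstacle.'' The involution you want is complement-rotation $\rho$, defined on the rectangle by $\rho(T)(i,j) = k+1 - T(m+1-i,\,n+1-j)$, which for rectangular shapes coincides with Sch\"utzenberger evacuation and does satisfy $\rho \circ \cP \circ \rho = \cP^{-1}$. That relation implies $\rho$ maps each promotion orbit bijectively onto a promotion orbit \emph{of the same size} --- but not necessarily onto the \emph{same} orbit, and in general it does not. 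The paper's own example exhibits the failure: applying $\rho$ to the first tableau of the displayed orbit (rows $1\,1\,2$ and $2\,3\,4$) yields the tableau with rows $2\,3\,4$ and $4\,5\,5$, which is not among the five tableaux of that orbit. So promotion orbits need not be $\rho$-stable.

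Consequently your fallback condition, ``or maps them to orbits of equal size,'' is not sufficient. For centrally symmetric $R$ one has $\sigma_R(\rho(T)) = |R|(k+1) - \sigma_R(T)$, and averaging this only shows that the average of $\sigma_R$ over an orbit $\cO$ plus its average over the (possibly different) orbit $\rho(\cO)$ equals $|R|(k+1)$; i.e., the multiset of orbit averages is symmetric about the claimed constant $|R|\left(\frac{k+1}{2}\right)$. Homomesy is the strictly stronger statement that \emph{every} orbit individually attains that average, and this cannot be extracted from the pairing. Nor can the defect be repaired by composing with powers of promotion: $\cP^{j}\rho$ is indeed an involution conjugating $\cP$ to $\cP^{-1}$, but it sends $\cO$ to the same image orbit $\rho(\cO)$ for every $j$. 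This is precisely why the theorem is not a formal consequence of the dihedral action generated by $\cP$ and $\rho$, and why Bloom, Pechenik, and Saracino required a finer, orbit-level analysis of how individual entries move under promotion rather than an appeal to evacuation symmetry. To complete your approach you would need either to prove that every orbit is $\rho$-stable (false, as shown above) or to supply an entirely different mechanism forcing the orbit-by-orbit equality.
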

For example, consider the following promotion orbit within $\SSYT_{5}(3^{2})$ 
(where our tableaux are now drawn ``English'' style, using matrix coordinates):

\[
\squaresize = 16pt \thickness = 1pt
\Young{\cred 1&1&\cblu 2 & \blank $\mapsto$\cr
	 \cblu 2&3&\cred 4\cr} \quad 
\Young{\cred 1&1&\cblu 3 &\blank $\mapsto$\cr
	 \cblu 2&5&\cred 5\cr} \quad 
\Young{\cred 1&2&\cblu 4 &\blank $\mapsto$\cr
	 \cblu 4&5&\cred 5\cr} \quad 
\Young{\cred 1&3&\cblu 4 &\blank $\mapsto$\cr
	 \cblu 3&4&\cred 5\cr} \quad 
\Young{\cred 2&2&\cblu 3 &\blank $\Lsh$\cr
	 \cblu 3&4&\cred 5\cr} \quad 
\]
Then the sum of the values in the upper left and lower right cells 
(shown in \cred{red}) across the orbit is $(5,6,6,6,7)$, 
which averages to $6 = 2 \left(\frac{5+1}{2}\right)$.
Similarly, the sum of the \cblu{blue} entries 
in the lower left and upper right corners
across the orbit is $(4,5,8,7,6)$, with average 6,
and the sum of the black entries 
in the middles of the two rows
across the orbit is $(4,6,7,7,6)$, with average 6.

This result was stated as a conjecture in several talks given by the authors, 
and recently proved by J.~Bloom, O.~Pechenik, and D.~Saracino~\cite{BPS13}.  
The latter also prove a version of the result for
\emph{cominuscule posets}.  
For the action of \emph{K-promotion} on \emph{increasing tableaux} 
of rectangular shapes, they prove an analogous result for two-rowed shapes, 
and show that it fails in general when $\lambda$ is a rectangle 
with more than two rows.

\section{\!\!Promotion and rowmotion in products of two chains}
\label{sec:chains}

For a finite poset $P$, we let $J(P)$ denote the set of order ideals 
(or down-sets) of $P$, $F(P)$ denote the set of (order) filters 
(or up-sets) of $P$, and $\cA(P)$ be the set of antichains of $P$.  
(For standard definitions and notation about
posets and ideals, see Stanley~\cite{Sta11}.)  There is a bijection
$J(P)\leftrightarrow \cA(P)$ given by taking the maximal elements of $I\in
J(P)$ or conversely by taking the order ideal generated by an antichain
$A\in \cA(P)$.  Similarly, there is a bijection $F(P)\leftrightarrow
\cA(P)$.  Composing these with the complementation bijection 
$I \mapsto \overline{I} = P \setminus I$ from $J(P)$ to $F(P)$ 
leads to an interesting map that has been studied 
in several contexts~\cite{BS74,Fon93,CF95,Pan09,AST11,SW12}, namely 
$\rowmotion_{A}:= \cA(P) \rightarrow J(P) \rightarrow F(P) \rightarrow \cA(P)$
and the companion map
$\rowmotion_{J}:= J(P) \rightarrow F(P) \rightarrow \cA(P) \rightarrow J(P)$,
where the subscript indicates whether we consider the map to be
operating on antichains or order ideals.  We often drop the subscript
and just write $\rowmotion$ when context makes clear which is meant.
Following Striker and Williams~\cite{SW12} we call this map \textbf{rowmotion}.

It should be noted that the maps considered 
by Brouwer, Schrijver, Cameron, Fon-der-Flaass, and Panyushev 
are the inverses 
of the maps considered by Striker, Williams, Armstrong, Stump, Thomas, 
and ourselves; we think that the newer convention is more natural, 
to the extent it is more natural to cycle through the integers mod $n$ 
by repeatedly adding 1 than by repeatedly subtracting 1.

Let $[a]\times [b]$ denote the poset that is a product of chains of
lengths $a$ and $b$. 
Figure~\ref{fig:rm42} shows an orbit of the action of $\rowmotion_{J}$ 
acting on the set of order ideals of the poset $P = [4] \times [2]$
starting from the ideal generated by the antichain $\{(2,1)\}$.  
Note that the elements of $[4]\times [2]$ here are
represented by the \emph{squares} rather than the points in the picture,
with covering relations represented by shared edges.  One can also view
this as an orbit of $\rowmotion_{A}$ if one just considers the
maximal elements in each shaded order ideal.  

This section contains our main specific results, namely that the following
triples exhibit homomesy: 
\[
\left(J([a]\times [b]), \rowmotion_{J}, \#I\right)\,;\qquad 
\left(\cA ([a]\times [b]), \rowmotion_{A}, \#A\right)\,;\text{ and}\qquad 
\left(J([a]\times [b]), \promotion_{J}, \#I\right)\,.
\]
Here $\promotion_{J}$ is the promotion operation 
to be defined in the next subsection, 
and $\#I$ (resp. $\#A$) denotes the statistic on $J(P)$ (resp. $\cA (P)$) 
that is the cardinality of the order ideal $I$ (resp. the antichain $A$).  
All maps operate on the left (e.g., we write $\promotion_J I$,
not $I \promotion_J$).

\subsection{Background on the toggle group}
\label{subsec-toggle}

Several of our examples arise from the \emph{toggle group} 
of a finite poset (first explicitly defined in~\cite{SW12};
see also~\cite{CF95,Sta09,SW12}).  We review some basic facts
and provide some pointers to relevant literature.

\begin{definition}\label{def:tog}

Let $P$ be a poset.  Given $x \in P$, we define the toggle operation 
$\sigma_x: J(P) \rightarrow J(P)$
(``toggling at $x$'') via
$$\sigma_x(I) = \left\{ \begin{array}{ll}
I \symmdiff \{x\} & \mbox{if $I \symmdiff \{x\} \in J(P)$}; \\
I & \mbox{otherwise,} \end{array} \right.$$
where $A \symmdiff B$ denotes the symmetric difference  
$(A \setminus B) \, \cup \, (B \setminus A)$. 
\end{definition}
\begin{proposition}[\cite{CF95}]
\label{prop:togtwo}
Let $P$ be a poset.  
(a) For every $x\in P$, $\sigma_{x}$ is an
involution, i.e., $\sigma_{x}^{2}=1$.\\  
(b) For every $x,y\in P$ where neither $x$ covers $y$ nor $y$ covers
$x$, the toggles commute, i.e., $\sigma_{x}\sigma_{y}=\sigma_{y}\sigma_{x}$.  
\end{proposition}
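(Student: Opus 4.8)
The plan is to treat the two parts separately, with part (a) an immediate consequence of the involutivity of symmetric difference and part (b) resting on a ``locality'' observation about when a toggle actually changes an ideal. For part (a), fix $x \in P$ and an ideal $I \in J(P)$. If $I \symmdiff \{x\} \notin J(P)$ then $\sigma_x(I) = I$ and hence $\sigma_x^2(I) = I$. Otherwise $\sigma_x(I) = I \symmdiff \{x\} =: I' \in J(P)$, and since symmetric difference with a singleton is its own inverse we have $I' \symmdiff \{x\} = I \in J(P)$, so $\sigma_x(I') = I' \symmdiff \{x\} = I$. In either case $\sigma_x^2(I) = I$, establishing (a).

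For part (b), the key step is to record exactly when $\sigma_x$ is \emph{active} at $I$, meaning $\sigma_x(I) \neq I$. If $x \in I$, then $I \setminus \{x\}$ is again a down-set precisely when no element covering $x$ lies in $I$; if $x \notin I$, then $I \cup \{x\}$ is a down-set precisely when every element covered by $x$ lies in $I$. Writing $N(x)$ for the set of Hasse-neighbors of $x$ (its covers together with the elements it covers), this shows that whether $\sigma_x$ is active at $I$, and the resulting change, depend only on $I \cap (\{x\} \cup N(x))$. I would package this as a short lemma, recording an indicator $a_x(I) \in \{0,1\}$ (the indicator that $\sigma_x$ is active at $I$) so that $\sigma_x(I) = I \symmdiff \left( a_x(I)\,\{x\} \right)$.

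With this in hand, the hypothesis that neither $x$ covers $y$ nor $y$ covers $x$ means (trivially when $x=y$, and otherwise) that $x \notin \{y\} \cup N(y)$ and $y \notin \{x\} \cup N(x)$. Consequently, applying $\sigma_x$ alters membership only at $x$, which lies outside $\{y\} \cup N(y)$, so $a_y\bigl(\sigma_x(I)\bigr) = a_y(I)$, and symmetrically $a_x\bigl(\sigma_y(I)\bigr) = a_x(I)$. Therefore both $\sigma_y \sigma_x(I)$ and $\sigma_x \sigma_y(I)$ equal $I \symmdiff \left( a_x(I)\,\{x\} \right) \symmdiff \left( a_y(I)\,\{y\} \right)$, and since symmetric difference is commutative and associative the two agree. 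I expect the main obstacle to be the careful verification of the local toggleability criterion and the bookkeeping that the hypothesis yields exactly the needed disjointness $x \notin \{y\} \cup N(y)$ and $y \notin \{x\} \cup N(x)$; once that is pinned down, the symmetric-difference algebra finishes the argument with no further case analysis.
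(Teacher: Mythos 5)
Your proof is correct in the setting the paper actually cares about, and there is nothing in the paper to compare it against: Proposition~\ref{prop:togtwo} is stated with a citation to Cameron and Fon-der-Flaass~\cite{CF95} and no proof is given in the text. Part (a) as you argue it is immediate and needs no hypotheses beyond the definition of $\sigma_x$. For part (b), your route — package each toggle as $\sigma_x(I) = I \symmdiff \left( a_x(I)\,\{x\} \right)$, prove that the activity indicator $a_x$ depends only on $I \cap \left( \{x\} \cup N(x) \right)$, and then use the disjointness $x \notin \{y\} \cup N(y)$, $y \notin \{x\} \cup N(x)$ granted by the no-covering hypothesis — is the standard argument, and the bookkeeping you describe goes through exactly as you say.

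One caveat you should make explicit: your locality lemma is where finiteness of $P$ enters. The equivalences ``$I \setminus \{x\} \in J(P)$ iff no cover of $x$ lies in $I$'' and ``$I \cup \{x\} \in J(P)$ iff every element covered by $x$ lies in $I$'' require that every strict relation $x < z$ refine to a covering chain (true in finite posets, false in general: in $\mathbb{Q}$ with its usual order no element covers any other, so $N(x) = \emptyset$ and toggleability is certainly not determined by $I \cap \{x\}$). The proposition as printed says only ``Let $P$ be a poset,'' but the surrounding section concerns toggle groups of finite posets and~\cite{CF95} works in that setting, so this is a matter of stating your standing assumption rather than a gap in substance. (Amusingly, part (b) is still true for arbitrary posets — if $x < y$ without covering, any intermediate element $x < z < y$ blocks all interference between the two toggles — but that argument bypasses your locality lemma rather than following from it.)
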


\begin{proposition}[\cite{CF95}]
\label{prop:togall}
Let $x_1,x_2,\dots,x_n$ be any linear extension (i.e., any
order-preserving listing of the elements) of a poset $P$ with $n$
elements.  Then the composite map $\sigma_{x_1} \sigma_{x_2} \cdots
\sigma_{x_n}$ coincides with the rowmotion operation $\rowmotion_{J}$.  
\end{proposition}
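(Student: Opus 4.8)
The plan is to establish the equality $\sigma_{x_1}\sigma_{x_2}\cdots\sigma_{x_n} = \rowmotion_J$ by evaluating both sides on an arbitrary order ideal $I \in J(P)$ and tracking the fate of each element as the toggles are applied. Writing the composite as a function, $\sigma_{x_n}$ acts first and $\sigma_{x_1}$ last; since $(x_1,\dots,x_n)$ is a linear extension, this means we process the elements of $P$ from a maximal element down to a minimal one, and crucially every element $z$ lying strictly below $x_i$ in $P$ occurs as some $x_j$ with $j < i$, hence is toggled strictly after $x_i$, while every element strictly above $x_i$ occurs as some $x_j$ with $j > i$. I would first record the elementary but pivotal observation that $\sigma_{x_j}$ alters the membership of $x_j$ alone; consequently each element of $P$ is toggled exactly once, and the membership of $x_i$ in the final output $\sigma_{x_1}\cdots\sigma_{x_n}(I)$ is already decided at the instant $\sigma_{x_i}$ is applied.

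Setting $J_n := I$ and $J_{k-1} := \sigma_{x_k}(J_k)$, so that $J_0 = \sigma_{x_1}\cdots\sigma_{x_n}(I)$, I would prove by downward induction on $i$ the invariant that $J_i$ agrees with $\rowmotion_J(I)$ on $\{x_{i+1},\dots,x_n\}$ and with $I$ on $\{x_1,\dots,x_i\}$. The base case $J_n = I$ is immediate, and in the inductive step only the membership of $x_i$ is in question, since $\sigma_{x_i}$ touches nothing else; thus the whole proof reduces to showing $x_i \in J_{i-1}$ if and only if $x_i \in \rowmotion_J(I)$. Here I would use the concrete description of rowmotion: $\rowmotion_J(I)$ is the order ideal generated by the minimal elements of the complementary filter $P \setminus I$, so that its maximal elements are exactly $\min(P \setminus I)$.

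I would then split into two cases according to whether $x_i \in I$. If $x_i \notin I$, then $\sigma_{x_i}$ inserts $x_i$ precisely when every element strictly below $x_i$ lies in $J_i$; by the induction hypothesis these lower elements carry their original $I$-membership, so insertion happens exactly when all elements below $x_i$ lie in $I$, i.e.\ exactly when $x_i \in \min(P \setminus I)$, which (since $x_i$ itself lies in the complement) is equivalent to $x_i \in \rowmotion_J(I)$. If instead $x_i \in I$, then $\sigma_{x_i}$ deletes $x_i$ exactly when no element strictly above $x_i$ belongs to $J_i$; by the induction hypothesis those higher elements carry their $\rowmotion_J(I)$-membership, so $x_i$ survives in $J_{i-1}$ if and only if some element above it lies in $\rowmotion_J(I)$, which by down-closure of $\rowmotion_J(I)$ is equivalent to $x_i \in \rowmotion_J(I)$.

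The step I expect to be the crux is closing this last case: a priori $x_i \in \rowmotion_J(I)$ need not force an element strictly above $x_i$ to lie in $\rowmotion_J(I)$, since $x_i$ could be a maximal element there. The resolution — and the one genuinely structural input — is the remark that the maximal elements of $\rowmotion_J(I)$ are the minimal elements of $P \setminus I$ and hence lie outside $I$; since we are in the case $x_i \in I$, the element $x_i$ cannot be maximal in $\rowmotion_J(I)$, so membership does propagate upward and the desired equivalence holds. Taking $i = 0$ in the invariant then yields $J_0 = \rowmotion_J(I)$, and as $I$ was arbitrary this proves $\sigma_{x_1}\cdots\sigma_{x_n} = \rowmotion_J$; independence of the chosen linear extension is automatic, since the right-hand side does not depend on it.
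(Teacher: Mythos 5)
Your proof is correct. Note that the paper itself gives no proof of this proposition --- it is quoted as a known result of Cameron and Fon-der-Flaass \cite{CF95} --- so there is no internal argument to compare against; your write-up is essentially the standard proof of that result. The two things that need care are exactly the ones you handle: (i) with maps acting on the left, $\sigma_{x_n}$ is applied first, so when $\sigma_{x_i}$ acts, every element above $x_i$ already carries its final ($\rowmotion_J(I)$) membership and every element below still carries its original ($I$) membership, which is what makes your downward induction close; and (ii) in the case $x_i \in I$, the equivalence ``some element strictly above $x_i$ lies in $\rowmotion_J(I)$ $\iff$ $x_i \in \rowmotion_J(I)$'' needs the observation that the maximal elements of $\rowmotion_J(I)$ are precisely $\min(P \setminus I)$ and hence lie outside $I$, so $x_i$ cannot be maximal there. (For that step one can argue even more directly: $x_i \in \rowmotion_J(I)$ means $x_i \le m$ for some $m \in \min(P\setminus I)$, and $x_i \in I$ forces $x_i \ne m$, so $m$ itself is the required element strictly above $x_i$.) Taking $i=0$ in your invariant then gives the claim, and independence of the linear extension is, as you say, automatic.
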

Although we do not use the following corollary, 
it provides context for how we view
rowmotion on a finite graded poset.  
\begin{corollary}[\cite{SW12}, Cor.\ 4.9]\label{cor:togranks}
Let $P$ be a graded poset of rank $r$, and set
$T_{k}:=\prod_{x\text{ has rank } k}\sigma_{x}$, 
the product of all the toggles of elements of fixed rank $k$.  
(This is well-defined by Proposition~\ref{prop:togtwo}.)  
Then the composition $T_{0}T_{1}T_{2}\dotsb
T_{r}$ coincides with $\rowmotion_{J}$, 
i.e., rowmotion is the same as toggling by ranks from top to bottom. 
\end{corollary}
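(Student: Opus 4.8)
The plan is to deduce this directly from Proposition~\ref{prop:togall} by choosing a linear extension that respects the rank function, so that almost all of the content is already packaged in the two toggle-group propositions. First I would record the well-definedness of each $T_k$: in a graded poset any two comparable elements have distinct ranks, so the elements of a fixed rank $k$ are pairwise incomparable and hence form an antichain. By Proposition~\ref{prop:togtwo}(b) the toggles $\sigma_x$ for $x$ of rank $k$ therefore pairwise commute, and $T_k = \prod_{x \text{ has rank } k}\sigma_x$ does not depend on the order in which these toggles are multiplied.

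Next I would build a specific linear extension $x_1, x_2, \dots, x_n$ of $P$ by listing first all elements of rank $0$ (in any order among themselves), then all elements of rank $1$, and so on up through rank $r$. This is a genuine linear extension: whenever $x < y$ in $P$ one has $\mathrm{rank}(x) < \mathrm{rank}(y)$ because $P$ is graded, so $x$ precedes $y$ in the listing. Applying Proposition~\ref{prop:togall} to this listing gives that the composite $\sigma_{x_1}\sigma_{x_2}\cdots\sigma_{x_n}$ equals $\rowmotion_J$.

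Finally I would group the factors of this composite by rank. The block of consecutive toggles coming from the rank-$0$ elements multiplies (in whatever internal order the listing used) to $T_0$, the next block to $T_1$, and so on up to the rank-$r$ block, which multiplies to $T_r$. Hence $\sigma_{x_1}\cdots\sigma_{x_n} = T_0 T_1 \cdots T_r$, and combining with the previous step yields $T_0 T_1 \cdots T_r = \rowmotion_J$, as claimed.

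The only point requiring care — and I expect it to be a matter of convention-tracking rather than a genuine obstacle — is matching the directions. Because maps act on the left, $\sigma_{x_n}$ (a maximal, hence top-rank element) is applied first and $\sigma_{x_1}$ (a minimal, bottom-rank element) last; equivalently, in $T_0 T_1 \cdots T_r$ the factor $T_r$ acts first and $T_0$ acts last. This is precisely the ``toggle by ranks from top to bottom'' reading in the statement, and it is consistent with the order-preserving convention for linear extensions used in Proposition~\ref{prop:togall}. Since all the substantive combinatorics is inherited from Propositions~\ref{prop:togtwo} and~\ref{prop:togall}, no further difficulty arises beyond this bookkeeping.
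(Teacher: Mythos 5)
Your proof is correct and is exactly the intended derivation: the paper states this result as a corollary of Propositions~\ref{prop:togtwo} and~\ref{prop:togall} (citing \cite{SW12} rather than spelling out details), and your argument --- rank-ordering gives a linear extension, commutativity of toggles within a rank makes each $T_k$ well defined, and grouping the factors of $\sigma_{x_1}\sigma_{x_2}\cdots\sigma_{x_n}$ by rank yields $T_0T_1\cdots T_r=\rowmotion_J$ --- is precisely that deduction. Your convention-tracking is also right: since maps act on the left, $T_r$ is applied first and $T_0$ last, which is the ``toggling by ranks from top to bottom'' reading of the statement.
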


\begin{figure}[t]
\begin{center}
\begin{tikzpicture}
\draw (0,0) -- (-2,2);
\draw (1,1) -- (-1,3);
\draw (0,0) -- (1,1);
\draw (-1,1) -- (0,2);
\draw (-2,2) -- (-1,3);
\draw[fill=cyan] (0.1,0) -- (0,0.1) -- (-0.1,0) -- (0,-0.1) -- (0.1,0);
\draw[fill=cyan] (1.1,1) -- (1,1.1) -- (0.9,1) -- (1,0.9) -- (1.1,1);
\draw[fill=cyan] (-0.9,1) -- (-1,1.1) -- (-1.1,1) -- (-1,0.9) -- (-0.9,1);
\draw[fill=cyan] (0.1,2) -- (0,2.1) -- (-0.1,2) -- (0,1.9) -- (0.1,2);
\draw[fill=cyan] (-1.9,2) -- (-2,2.1) -- (-2.1,2) -- (-2,1.9) -- (-1.9,2);
\draw[fill=cyan] (-0.9,3) -- (-1,3.1) -- (-1.1,3) -- (-1,2.9) -- (-0.9,3);
\draw[fill=cyan] (3,3) -- (6,0) -- (8,2) -- (5,5) -- (3,3);
\draw (3,3) -- (6,0);
\draw (4,4) -- (7,1);
\draw (5,5) -- (8,2);
\draw (3,3) -- (5,5);
\draw (4,2) -- (6,4);
\draw (5,1) -- (7,3);
\draw (6,0) -- (8,2);
\draw[fill=black] (3,3) circle (0.05cm);
\draw[fill=black] (4,2) circle (0.05cm);
\draw[fill=black] (5,1) circle (0.05cm);
\draw[fill=black] (6,0) circle (0.05cm);
\draw[fill=black] (4,4) circle (0.05cm);
\draw[fill=black] (5,3) circle (0.05cm);
\draw[fill=black] (6,2) circle (0.05cm);
\draw[fill=black] (7,1) circle (0.05cm);
\draw[fill=black] (5,5) circle (0.05cm);
\draw[fill=black] (6,4) circle (0.05cm);
\draw[fill=black] (7,3) circle (0.05cm);
\draw[fill=black] (8,2) circle (0.05cm);
\end{tikzpicture}
\end{center}
\caption{The ordinary and modified Hasse diagrams of $[3]\times [2]$}
\label{fig:hasse}
\end{figure}

We focus on the case $P = [a] \times [b]$, 
whose elements we write as $(k,\ell)$.
We depict this poset by sending
$(k,\ell) \in [a] \times [b]$ to $(\ell-k,k+\ell-2) \in \bZ \times \bZ$ 
(for all $1 \leq k \leq a$, $1 \leq \ell \leq b$).
That is, we take the points $(k,\ell) \in \bN \times \bN$,
rotate by 45 degrees counterclockwise while dilating by $\sqrt{2}$,
and then flip points across the vertical axis.
E.g., for the poset $P = [3] \times [2]$
we get the diagram shown in the left panel of Figure~\ref{fig:hasse}. 
This is the usual Hasse diagram for $[a] \times [b]$
(or rather one of the two usual Hasse diagrams,
since one could exchange the roles of $a$ and $b$).
We typically draw a modified diagram in which
the dots in the Hasse diagram are replaced by boxes
(much as the dots in a Ferrers graph of a partition
correspond to boxes in the Young diagram);
see the right panel of Figure~\ref{fig:hasse}.
This modified Hasse diagram makes it easier to see the correspondence
between order ideals and lattice paths
that will be crucial in much of what follows;
see Figure~\ref{fig:path}.
These modified Hasse diagrams contain dots, 
but those dots do not correspond to elements of the poset.

\begin{figure}[t]
\begin{center}
\begin{tikzpicture}
\draw[fill=cyan] (3,3) -- (6,0) -- (8,2) -- (7,3) -- (6,2) -- (4,4) -- (3,3);
\draw (0,0) -- (-2,2);
\draw (1,1) -- (-1,3);
\draw (0,0) -- (1,1);
\draw (-1,1) -- (0,2);
\draw (-2,2) -- (-1,3);
\draw[fill=cyan] (0.1,0) -- (0,0.1) -- (-0.1,0) -- (0,-0.1) -- (0.1,0);
\draw[fill=cyan] (1.1,1) -- (1,1.1) -- (0.9,1) -- (1,0.9) -- (1.1,1);
\draw[fill=cyan] (-0.9,1) -- (-1,1.1) -- (-1.1,1) -- (-1,0.9) -- (-0.9,1);
\draw[fill=white] (0.1,2) -- (0,2.1) -- (-0.1,2) -- (0,1.9) -- (0.1,2);
\draw[fill=cyan] (-1.9,2) -- (-2,2.1) -- (-2.1,2) -- (-2,1.9) -- (-1.9,2);
\draw[fill=white] (-0.9,3) -- (-1,3.1) -- (-1.1,3) -- (-1,2.9) -- (-0.9,3);
\draw (3,3) -- (6,0);
\draw (4,4) -- (7,1);
\draw (5,5) -- (8,2);
\draw (3,3) -- (5,5);
\draw (4,2) -- (6,4);
\draw (5,1) -- (7,3);
\draw (6,0) -- (8,2);
\draw[fill=black] (3,3) circle (0.05cm);
\draw[fill=black] (4,2) circle (0.05cm);
\draw[fill=black] (5,1) circle (0.05cm);
\draw[fill=black] (6,0) circle (0.05cm);
\draw[fill=black] (4,4) circle (0.05cm);
\draw[fill=black] (5,3) circle (0.05cm);
\draw[fill=black] (6,2) circle (0.05cm);
\draw[fill=black] (7,1) circle (0.05cm);
\draw[fill=black] (5,5) circle (0.05cm);
\draw[fill=black] (6,4) circle (0.05cm);
\draw[fill=black] (7,3) circle (0.05cm);
\draw[fill=black] (8,2) circle (0.05cm);
\draw[blue, line width=5pt] (3,3) -- (4,4) -- (5,3) -- (6,2) -- (7,3) -- (8,2);
\end{tikzpicture}
\end{center}
\caption{An order ideal in $[3] \times [2]$
and the associated lattice path}
\label{fig:path}
\end{figure}

\begin{definition}\label{def:rankfile}
For $P = [a] \times [b]$, we call the sets of $(k, \ell)$
with constant $k+\ell-2$ \textbf{ranks}
(in accordance with standard poset terminology),
and the sets of $(k, \ell)$
with constant $\ell-k$ \textbf{files},
sets with constant $k$ \textbf{positive fibers},
and sets with constant $\ell$ \textbf{negative fibers}.  
(One would like to say that ``fiber'' means ``row or column'',
but since Striker and Williams use the words ``row'' and ``column''
to denote what we call ranks and files respectively,
we fear that saying this would cause confusion.
The words ``positive'' and ``negative'' indicate the slopes 
of the lines on which the fibers lie in the Hasse diagram.)
More specifically, the element $(k,\ell )\in [a] \times [b]$ 
belongs to {\em rank} $k+\ell-2$, {\em file} $\ell-k$,
\emph{positive fiber} $k$, and \emph{negative fiber} $\ell$. 
\end{definition}

To each order ideal $I \in J([a] \times [b])$ we associate
a lattice path of length $a+b$ 
joining the points $(-a,a)$ and $(b,b)$ in the plane,
where each step is of type $(i,j) \rightarrow (i+1,j+1)$
or of type $(i,j) \rightarrow (i+1,j-1)$;
this path separates the squares in the modified Hasse diagram
corresponding to poset elements that lie in $I$
from the squares corresponding to poset elements that do not.
Here is a self-contained and concrete description.
Given $1 \leq k \leq a$ and $1 \leq \ell \leq b$,
represent $(k,\ell) \in [a]\times [b]$
by the square centered at $(\ell-k,\ell+k-1)$ with vertices 
$(\ell-k,\ell+k-2)$, $(\ell-k,\ell+k)$, 
$(\ell-k-1,\ell+k-1)$, and $(\ell-k+1,\ell+k-1)$.
(Thus, the poset-elements $(k,\ell) = (1,1)$, $(a,1)$, $(1,b)$, and $(a,b)$
are respectively the bottom, left, right, and top squares
representing elements of $[a] \times [b]$.)
Then the squares representing the elements of the order ideal $I$
form a ``Russian-style'' Young diagram
whose upper border is a path joining
some point on the line through the origin of slope $-1$
to some point on the line through the origin of slope $+1$.
Adding extra edges of slope $-1$ at the left
and extra edges of slope $+1$ at the right if necessary,
we get a path joining $(-a,a)$ to $(b,b)$.
See Figures~\ref{fig:pm32a},~\ref{fig:pm32b}, and~\ref{fig:rm42} 
for several examples of this correspondence.  

\begin{definition}\label{def:htfn}

We can think of this path as the graph of
a (real) piecewise-linear function $h_I: [-a,b]\rightarrow [0,a+b]$;
we call this function (or its restriction to $[-a,b] \cap \bZ$)
the {\bf height function} representation of the ideal $I$.
Equivalently, for every $k \in [-a, b]$, we have
\[
 h_I (k) = \left|k\right| + 2 \# \left(
 \text{elements of } I \text{ in file } k\right) .
\]
In particular, $h_I (-a) = a$ and $h_I (b) = b$.

To this height function we can in turn associate a word
consisting of $a$ $-1$'s and $b$ $+1$'s,
whose $i$th letter (for $1 \leq i \leq a+b$) is $h_I(i-a)-h_I(i-a-1)=\pm 1$;
we call this the {\bf sign-word} associated with the order ideal $I$.
\end{definition}

Note that the sign-word simply lists the slopes of the segments making
up the path, and that either the sign-word or the height-function encodes
all the information required to determine the order ideal.  

\begin{proposition}\label{prop:cardI}
Let $I\in J([a]\times [b])$ correspond with height function
$h_{I}:[-a,b]\rightarrow \bR$. Then 
\[
\sum_{k=-a}^{b}h_{I}(k) = \frac{a(a+1)}{2}+\frac{b(b+1)}{2}+2\#I\,.
\]

\end{proposition}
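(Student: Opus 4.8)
The plan is to insert the explicit formula for the height function supplied by Definition~\ref{def:htfn} into the left-hand side and then split the resulting sum into a combinatorial part and a purely numerical part. Concretely, writing $n_k := \#(\text{elements of } I \text{ in file } k)$, the defining relation $h_I(k) = |k| + 2 n_k$ gives
\[
\sum_{k=-a}^{b} h_I(k) = \sum_{k=-a}^{b} |k| \; + \; 2 \sum_{k=-a}^{b} n_k,
\]
so the whole proposition reduces to evaluating these two sums separately.

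First I would dispatch the combinatorial sum $\sum_{k=-a}^{b} n_k$. Every element $(i,\ell)\in [a]\times[b]$ lies in exactly one file, namely the file indexed by $\ell-i$, and these indices range over $[1-a,b-1]\subseteq[-a,b]$; hence summing $n_k$ over all $k$ in the stated range counts each element of $I$ exactly once, giving $\sum_{k=-a}^{b} n_k = \#I$ and contributing the term $2\#I$. (The two extreme arguments $k=-a$ and $k=b$ contribute $n_k=0$, matching the boundary values $h_I(-a)=a$ and $h_I(b)=b$ noted in the definition.)

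Then I would evaluate the numerical sum. Splitting off the vanishing $k=0$ term,
\[
\sum_{k=-a}^{b} |k| = \sum_{k=1}^{a} k + \sum_{k=1}^{b} k = \frac{a(a+1)}{2} + \frac{b(b+1)}{2},
\]
two triangular numbers. Adding the two contributions yields exactly the claimed right-hand side. There is no real obstacle here: the only thing to keep straight is the bookkeeping of the file-index range, i.e.\ confirming that each poset element is counted once and that the endpoints of the domain $[-a,b]$ contribute nothing to the element count. The genuine content of the statement is already packaged into the defining formula $h_I(k)=|k|+2n_k$; were one to insist on proving the proposition directly from the lattice-path picture, the work would merely shift to re-deriving that formula, by checking that the portion of the path above file $k$ exceeds the minimal staircase value $|k|$ by exactly $2$ for each element of $I$ in that file.
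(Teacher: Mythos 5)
Your proof is correct and matches the paper's (implicit) reasoning: the paper states Proposition~\ref{prop:cardI} without proof, treating it as an immediate consequence of the formula $h_I(k)=|k|+2\#(\text{elements of }I\text{ in file }k)$ from Definition~\ref{def:htfn}, which is exactly the substitution-and-split computation you carry out. Your bookkeeping of the file-index range and the vanishing endpoint terms is accurate, so nothing is missing.
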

So to prove that the cardinality of $I$ is homomesic, it suffices to
prove that the function $h_I(-a)+h_I(-a+1)+\dots+h_I(b)$ is homomesic
(where our combinatorial dynamical system acts on height functions $h$
via its action on order ideals $I$).

\subsection{Promotion in products of two chains}
\label{subsec-promotion}

Given a ranked poset $P$, there always exists a (not necessarily
injective) map from $P$ to $\bZ \times \bZ$ that allows files to
be defined; given such a map (an rc-embedding, in the terminology of
Striker and Williams), it follows from Proposition~\ref{prop:togtwo}
that all toggles corresponding to elements within the same file commute
so their product is a well defined operation on $J(P)$.  
This allows one to define an operation on $J(P)$
by successively toggling all the files from left to right, 
in analogy to Corollary~\ref{cor:togranks}.   

From here on, we set $P = [a] \times [b]$, 

\begin{theorem}[{Striker-Williams~\cite[\S~6.1]{SW12}}]\label{thm:sw}
Let $x_1,x_2,\dots,x_n$ be any enumeration
of the elements $(k,\ell)$ of the poset $[a] \times [b]$
arranged in order of increasing $\ell - k$.
Then the action on $J(P)$ given by $\promotion:=
\sigma_{x_n} \circ \sigma_{x_{n-1}} \circ \cdots \circ \sigma_{x_1}$
viewed as acting on the paths (or the sign-words
representing them) is just a leftward cyclic shift.  
\end{theorem}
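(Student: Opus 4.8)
The plan is to transfer the entire computation to the height-function model of Definition~\ref{def:htfn} and to show that toggling a single file acts there as a simple local reflection; the leftward cyclic shift then falls out of a short telescoping induction. Write the files of $P=[a]\times[b]$ as $f=-a,-a+1,\dots,b$ and abbreviate $h_j:=h_I(-a+j)$, so that $h_0=a$ and $h_{a+b}=b$ are fixed by the boundary conditions and the sign-word is $(s_1,\dots,s_{a+b})$ with $s_i=h_i-h_{i-1}$. The toggles of the elements sharing a common file commute, since no two of them form a covering pair (Proposition~\ref{prop:togtwo}), so the file-toggle $T_f:=\prod_{x\in\text{file }f}\sigma_x$ is well defined, and $\promotion=T_{b-1}\circ\cdots\circ T_{1-a}$ is exactly the left-to-right file sweep. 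The key lemma I would establish is that
\[
T_f:\quad h_I(f)\ \longmapsto\ h_I(f-1)+h_I(f+1)-h_I(f),
\]
with every other value of the height function left unchanged.

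That only $h_I(f)$ moves is immediate, since toggling elements of file $f$ alters only the count $\#(I\cap\text{file }f)$, on which $h_I(f)=|f|+2\#(I\cap\text{file }f)$ depends. For the reflection itself I would use that file $f$ is a chain $c_1<\cdots<c_m$ and that the down-set $I$ meets it in an initial segment $\{c_1,\dots,c_{m'}\}$. Toggling this chain can only add $c_{m'+1}$ or only remove $c_{m'}$, and these are mutually exclusive: the elements covered by $c_{m'+1}$ coincide with the elements covering $c_{m'}$ (the pair $(k{+}1,k{+}f)$ and $(k,k{+}f{+}1)$, where $c_{m'}=(k,k+f)$), so $c_{m'+1}$ is addable exactly when both lie in $I$, while $c_{m'}$ is removable exactly when neither does. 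Performing the toggles from bottom to top then shows that $T_f$ executes precisely this single move and nothing else: it adds a box (a valley of the path, $h_I(f)\mapsto h_I(f)+2$), removes a box (a peak, $h_I(f)\mapsto h_I(f)-2$), or does nothing (a monotone step, $h_I(f)$ fixed). In each of these three cases $h_I(f-1)+h_I(f+1)-h_I(f)$ returns exactly $h_I(f)+2$, $h_I(f)-2$, or $h_I(f)$, which is the assertion of the lemma. I expect this correspondence between the combinatorial toggle and the peak/valley geometry of the path to be the main obstacle, as it is the one step demanding genuine case analysis.

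Granting the lemma, the remainder is a clean telescoping. Applying $T_{1-a},T_{2-a},\dots,T_{b-1}$ in this order, always feeding forward the updated values, an easy induction gives that immediately after the $j$th toggle the current value at file $-a+j$ equals $h_0-h_1+h_{j+1}$: if the value at file $-a+(j-1)$ has already become $h_0-h_1+h_j$, then the reflection at file $-a+j$ produces $(h_0-h_1+h_j)+h_{j+1}-h_j=h_0-h_1+h_{j+1}$. Since $h_0-h_1=-s_1$, the final height function $\tilde h$ satisfies $\tilde h_0=h_0$, $\tilde h_{a+b}=h_{a+b}$, and $\tilde h_j=h_{j+1}-s_1$ for $1\le j\le a+b-1$. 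Reading off the new slopes gives $\tilde s_1=\tilde h_1-\tilde h_0=h_2-h_1=s_2$, then $\tilde s_i=\tilde h_i-\tilde h_{i-1}=h_{i+1}-h_i=s_{i+1}$ for $2\le i\le a+b-1$, and finally $\tilde s_{a+b}=h_{a+b}-\tilde h_{a+b-1}=h_1-h_0=s_1$. Hence the new sign-word is $(s_2,s_3,\dots,s_{a+b},s_1)$, which is exactly the leftward cyclic shift, completing the proof.
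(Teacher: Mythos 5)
Your proof is correct, but note that there is nothing in the paper to compare it against: Theorem~\ref{thm:sw} is quoted from Striker and Williams~\cite[\S~6.1]{SW12} and the paper gives no proof of it, so your proposal is a self-contained substitute rather than a variant of an internal argument. Your route --- recasting the file-toggle $T_f$ as the local reflection $h_I(f)\mapsto h_I(f-1)+h_I(f+1)-h_I(f)$ of the height function of Definition~\ref{def:htfn}, then composing the reflections left to right via a telescoping induction --- is the piecewise-linear-toggle point of view, which is exactly the perspective developed in the companion paper~\cite{EP13}; it is a clean and arguably more illuminating argument than a direct combinatorial bookkeeping of toggles. The heart of your key lemma is sound: since the elements covering $c_{m'}$ coincide with the elements covered by $c_{m'+1}$, addability of $c_{m'+1}$ and removability of $c_{m'}$ are mutually exclusive and correspond precisely to valleys and peaks of the lattice path, which is what makes the reflection formula hold in all three cases. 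Two points are stated more tersely than they deserve, though both are routine: (i) you should say explicitly why no other element of the file can move --- for $i<m'$ the element $c_i=(k',k'+f)$ has the cover $(k'+1,k'+f)$, which lies below $c_{i+1}\in I$ and hence in $I$, so $c_i$ is not removable, while for $i>m'+1$ the element $c_i=(k'',k''+f)$ covers $(k''-1,k''+f)$, which lies above $c_{i-1}\notin I$ and hence outside $I$, so $c_i$ is not addable; and (ii) the extreme cases where $I$ contains none or all of a file, and the files whose elements sit on the boundary of the poset (so that one of the two neighboring covers or covered elements does not exist), need the usual convention that conditions on nonexistent elements hold vacuously --- the reflection formula still holds there because the missing neighbor's height is pinned at $|f|\pm 1$ appropriately. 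Also, your indexing ``files $f=-a,\dots,b$'' sweeps in the two endpoint abscissas $-a$ and $b$, which carry no poset elements; your later use of $T_{1-a},\dots,T_{b-1}$ is the correct range. With those small repairs the telescoping computation and the final read-off $\tilde s_1=s_2$, $\tilde s_i=s_{i+1}$, $\tilde s_{a+b}=s_1$ are exactly right, and the theorem follows.
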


Striker and Williams call this well-defined composition $\promotion $
\textbf{promotion} (since it is related to Sch\"utzenberger's
notion of promotion on linear extensions of posets). They show
that it is conjugate to rowmotion in the toggle group, obtaining a
much simpler bijection to prove Panyushev's conjecture in Type A, and
generalizing an equivariant bijection for $[a]\times [b]$ of
Stanley~\cite[remark after Thm~2.5]{Sta09}.  
This definition and their results apply more generally to a class
they defined, initially called \textit{rc-posets} but later renamed
\textit{rc-embedded posets}, whose elements fit neatly into 
``rows'' and ``columns'' (which we call here ``ranks'' and ``files'').  
As with $\rowmotion$, we can think of $\promotion$ as operating either
on $J(P)$ or $\cA (P)$, adding subscripts $\promotion_{J}$ or
$\promotion_{A}$ if necessary.  
Since the cyclic left-shift has order $a+b$, so does $\promotion$.

\begin{figure}[t]
\begin{center}
\includegraphics[width=4.9in]{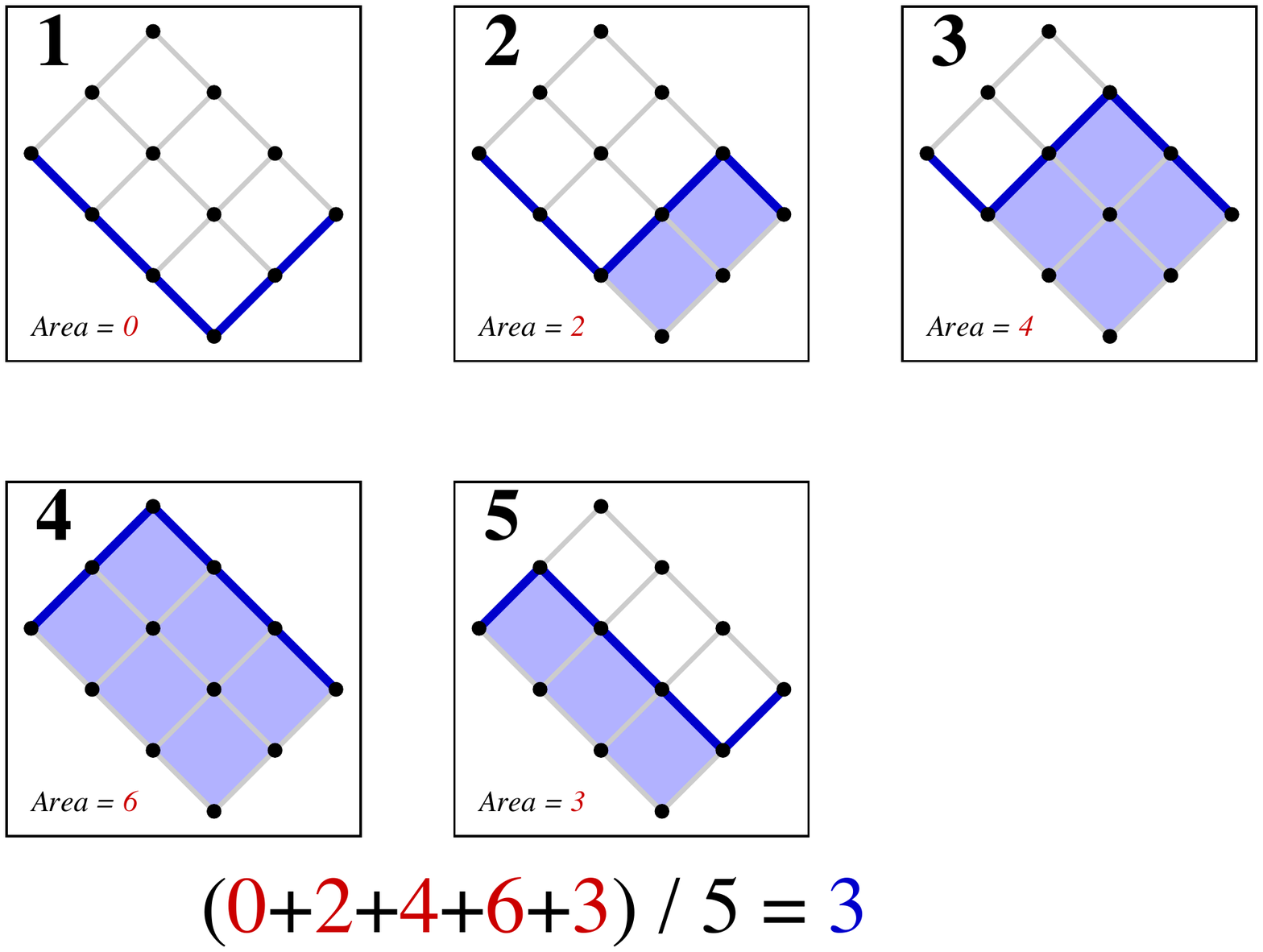}
\end{center}
\caption{One promotion orbit in $J([3]\times [2])$}
\label{fig:pm32a}
\end{figure}

\begin{figure}[t]
\begin{center}
\includegraphics[width=4.9in]{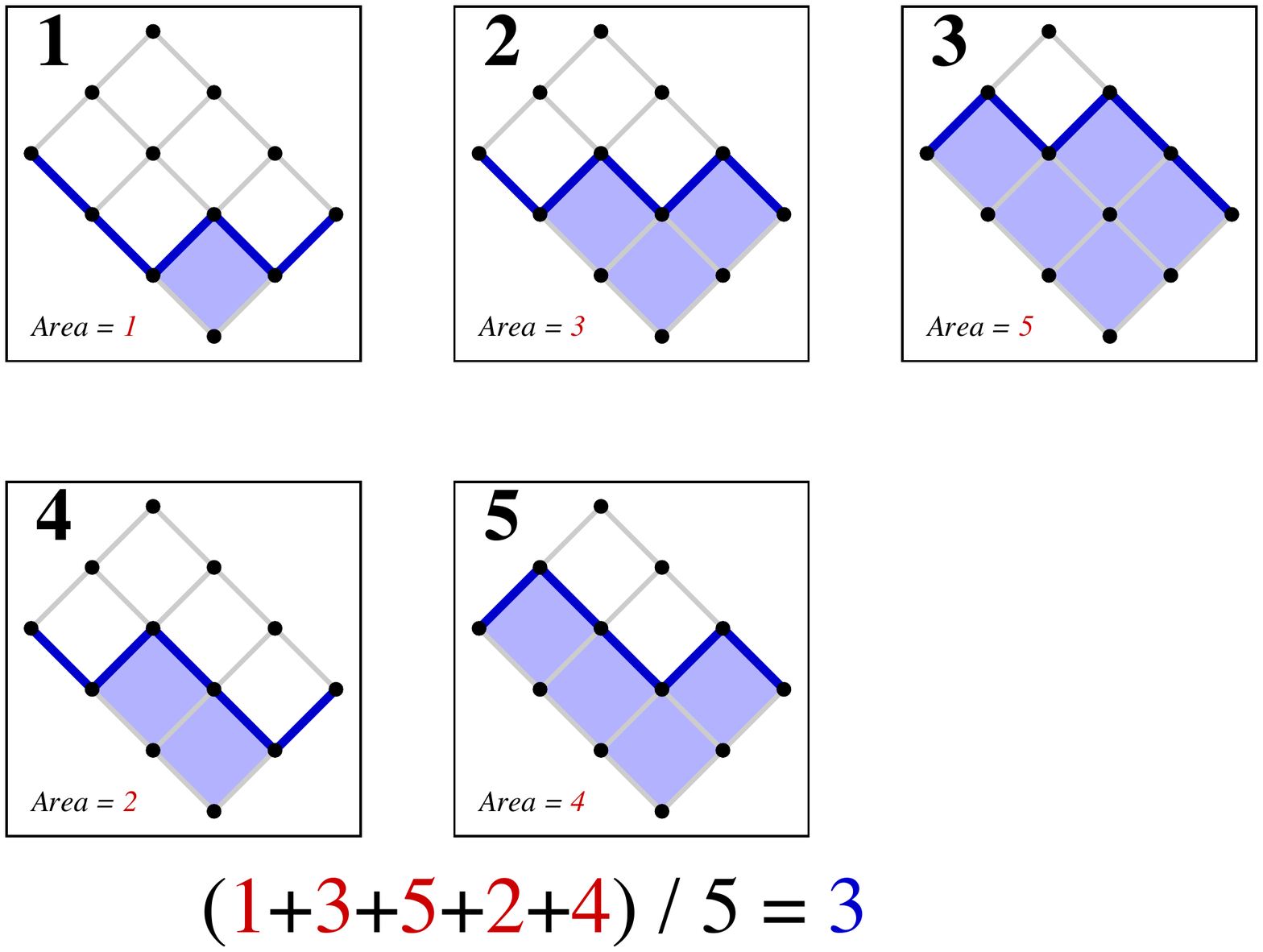}
\end{center}
\caption{The other promotion orbit in $J([3]\times [2])$}
\label{fig:pm32b}
\end{figure}

\begin{theorem}
\label{promotion-ideals}
The cardinality statistic is $c$-mesic under the action of 
promotion $\promotion_J$ on $J([a]\times [b])$, 
with $c={ab}/{2}$.  
\end{theorem}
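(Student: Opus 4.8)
The plan is to reduce the statement to a linear-averaging computation on sign-words by combining the two facts already at hand: Theorem~\ref{thm:sw}, which says that $\promotion_J$ acts on the sign-word of an order ideal as a leftward cyclic shift, and Proposition~\ref{prop:cardI}, which expresses $\#I$ in terms of the height function $h_I$. By Proposition~\ref{prop:cardI} the statistic $\#I$ is homomesic under $\promotion_J$ if and only if the height-sum $H(I) := \sum_{k=-a}^{b} h_I(k)$ is homomesic, since the two differ by an additive constant and an overall factor of $2$. So I would work entirely with $H$.

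First I would rewrite $H$ as a linear functional of the sign-word. Writing the sign-word as $s=(s_1,\dots,s_n)$ with $n=a+b$ and $s_i\in\{-1,+1\}$, the defining relation $h_I(i-a)-h_I(i-a-1)=s_i$ together with $h_I(-a)=a$ gives $h_I(k)=a+\sum_{i=1}^{k+a}s_i$. Summing over $k$ from $-a$ to $b$ and interchanging the order of summation yields $H(I)=a(n+1)+\sum_{i=1}^{n}(n-i+1)\,s_i$. Thus, up to the constant $a(n+1)$, proving homomesy of $H$ amounts to proving that the linear statistic $g(s):=\sum_{i=1}^{n}c_i s_i$, with $c_i=n-i+1$, is homomesic under the cyclic shift.

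For the averaging I would use the ``superorbit'' device already employed in the proof of Proposition~\ref{prop:bits}: since $\promotion_J$ has order $n$, any orbit has size $d$ with $d\mid n$, and repeating it $n/d$ times produces a superorbit of length $n$ with the same average of any statistic. Summing $g$ over such a superorbit and interchanging the two sums, each coefficient $c_i$ is multiplied by $\sum_{j=1}^{n}s_j=b-a$, so the total is $\big(\sum_{i=1}^{n}c_i\big)(b-a)=\frac{n(n+1)}{2}(b-a)$, which is manifestly independent of the orbit. Dividing by $n$ shows that $g$ has constant orbit-average $\frac{(n+1)(b-a)}{2}$, whence $H$ has orbit-average $a(n+1)+\frac{(n+1)(b-a)}{2}=\frac{n(n+1)}{2}$.

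Finally I would feed this back through Proposition~\ref{prop:cardI}: the orbit-average of $\#I$ is $\frac12\big(\frac{n(n+1)}{2}-\frac{a(a+1)}{2}-\frac{b(b+1)}{2}\big)$, and since $n(n+1)-a(a+1)-b(b+1)=2ab$ when $n=a+b$, this simplifies to $ab/2$, as claimed. I do not expect a genuine obstacle here: essentially all of the combinatorial substance is carried by Theorem~\ref{thm:sw} and Proposition~\ref{prop:cardI}, so what remains is bookkeeping. The only point requiring a little care is that promotion orbits need not all have the same size, and that is exactly what the superorbit trick disposes of cleanly.
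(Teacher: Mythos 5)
Your proposal is correct and takes essentially the same route as the paper's proof: both use Theorem~\ref{thm:sw} and Proposition~\ref{prop:cardI} to reduce $\#I$ to linear functionals of the sign-word, and both rest on the same key fact that over the $a+b$ cyclic shifts (the ``superorbit'') each letter position sums to $b-a$. The only difference is bookkeeping: the paper proves that each increment $h_I(k)-h_I(k-1)$ is separately homomesic (which also yields the refined file-by-file statement noted after its proof), whereas you aggregate everything into the single weighted sum $\sum_{i=1}^{n}(n-i+1)s_i$ before averaging, which makes the constant $c=ab/2$ emerge by explicit computation rather than implicitly.
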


\begin{proof}
To show that $\#I$ is homomesic, by Proposition~\ref{prop:cardI} it
suffices to show that $h_I(k)$ is homomesic 
for all $-a \leq k \leq b$.  
Note that here we are thinking of $I$ as varying over $J(P)$,
and $h_I(k)$ (for $I$ varying) as being a
real-valued function on $J(P)$.

We can write $h_I(k)$ as the telescoping sum
$h_I(-a) + (h_I(-a+1)-h_I(-a)) + (h_I(-a+2)-h_I(-a+1)) 
+ \dots + (h_I(k)-h_I(k-1))$;
to show that $h_I(k)$ is homomesic for all $k$,
it will be enough to show that
all the increments $h_I(k)-h_I(k-1)$ are homomesic.
Note that these increments are precisely the letters of the sign-word of $I$.
Create a square array with $a+b$ rows and $a+b$ columns,
where the rows are the sign-words of $I$
and its successive images under the action of $\promotion$;
each row is just the cyclic left-shift of the row before.
Here for instance is the array for the example in Figure~\ref{fig:pm32a}
with $P$ the poset $[3] \times [2]$ and $I$ the empty order ideal:
$$
\begin{array}{ccccc}
- & - & - & + & + \\
- & - & + & + & - \\
- & + & + & - & - \\
+ & + & - & - & - \\
+ & - & - & - & + 
\end{array}
$$
Since each row contains $a$ $-1$'s and $b$ $+1$'s,
the same is true of each column.
Thus, for all $k$, the average value of the $k$th letters of the sign-words
of $I$, $\promotion I$, $\promotion^2 I$, \dots, $\promotion^{a+b-1} I$
is $(b-a)/(b+a)$.
This shows that the increments are homomesic, as required,
which suffices to prove the theorem.
\end{proof}

Our proof actually shows the more refined result that the restricted cardinality
functions  $\# (I \cap S)$ where $S$ is any file of $[a] \times [b]$ are
homomesic with respect to the action of $\promotion_{J}$.  

\begin{remark}\label{rem:corprop-bits}
We now have a third proof of Proposition~\ref{prop:bits}.  The bijection
sending $I \in J([a] \times [b])$ to its sign-word is an
isomorphism between promotion acting on order ideals in $[a]\times [b]$ 
and the leftward cyclic shift acting on the sign-word. 
Furthermore, the cardinality of any order ideal is mapped to
the number of inversions of the sign-word. 
So the homomesy of Theorem \ref{promotion-ideals}
yields the homomesy of Proposition~\ref{prop:bits}.  
\end{remark}

The next example shows that the cardinality of the antichain $A_I$
associated with the order ideal $I$
is {\em not\/} homomesic under the action of promotion $\promotion$.  
\begin{example}\label{eg:procA}
Consider the two promotion orbits of $\promotion_{A}$ shown in
Figures~\ref{fig:pm32a} and \ref{fig:pm32b}.  Although the statistic $\#I$ is
homomesic, giving an average of 3 in both cases, 
the statistic $\#A$ averages 
to $\frac{1}{5}\left(0+1+1+1+1\right)=\frac{4}{5}$ in the first orbit 
and to $\frac{1}{5}\left(1+2+2+1+2\right)=\frac{8}{5}$ in the second.  
\end{example}

\subsection{Rowmotion in products of two chains}
\label{subsec-rowmotion}

\begin{figure}[t]
\begin{center}
\includegraphics[width=4.7in]{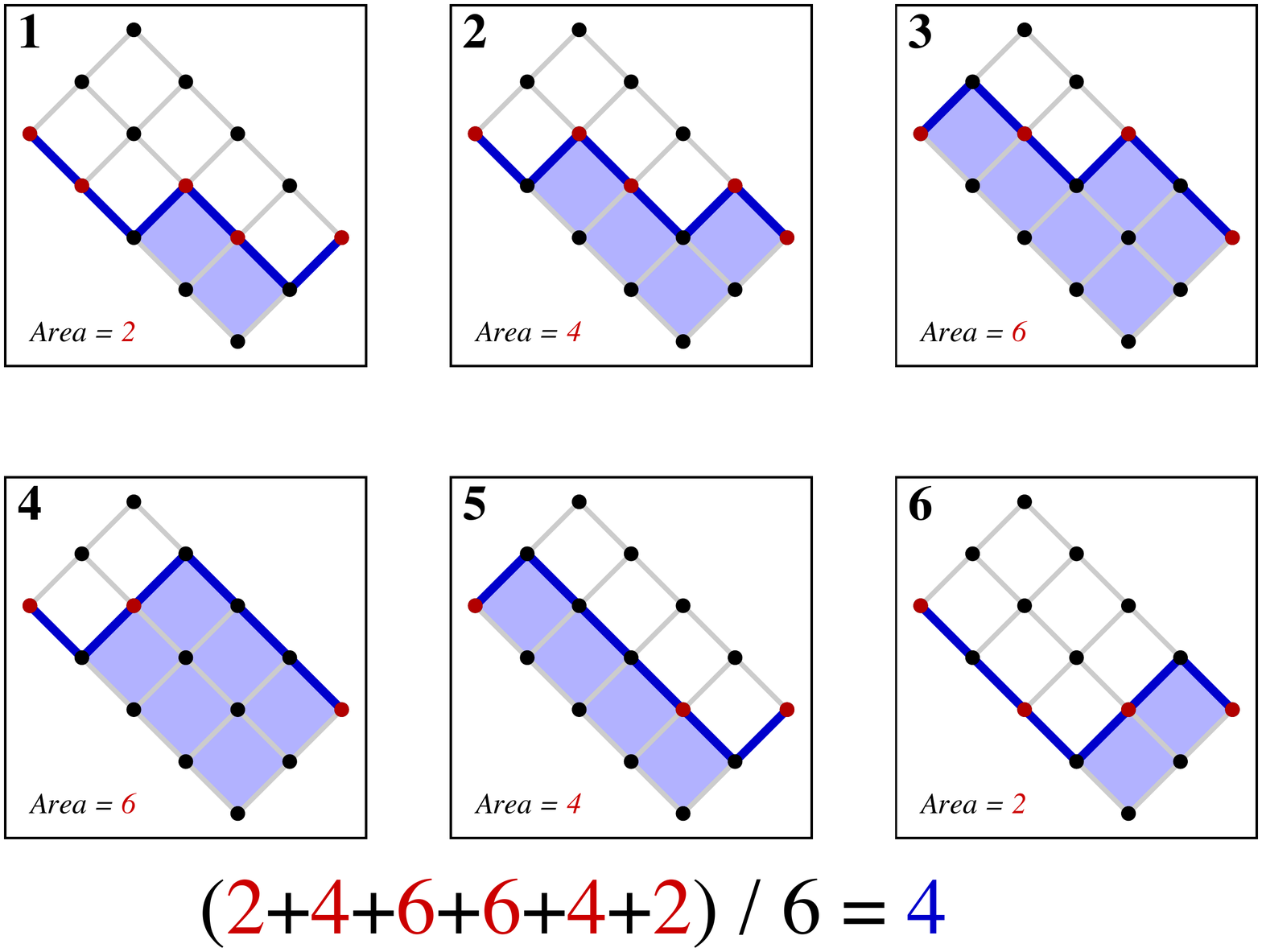}
\end{center}
\caption{A rowmotion orbit in $J([4]\times [2])$}
\label{fig:rm42}
\end{figure}

Unlike promotion, rowmotion turns out to exhibit homomesy 
with respect to both the statistic that counts the size of an order ideal 
and the statistic that counts the size of an antichain.

\subsubsection{Rowmotion on order ideals in $J([a] \times [b])$}\label{sss:rmJP}

We can describe rowmotion nicely in terms of the sign-word.
We define a {\bf block} within any word $w\in \{-1, +1\}^{n}$
to be an occurrence of the factor $-1,+1$
(that is, a $-1$ followed immediately by a $+1$).
A \textbf{gap} in the sign-word is a factor which contains
no block; in other words, it is a factor of the form
$+1,+1,\ldots,+1, -1,-1,\ldots,-1$.
This uniquely decomposes any sign word into blocks and gaps.  

Now define the \textbf{block-gap reversal} of $w$ to be 
the word $\tilde{w}$ obtained by
decomposing $w$ into contiguous block and gap subwords, 
then reversing each subword (leaving
the subwords in the same relative order).  
For example, the binary word 
\begin{align*}
w &= -1, +1,   +1, -1, -1,   -1, +1,   +1 \qquad \qquad 
\text{\!\!\!\!\!is divided into blocks and gaps as}&\hfil \\
  & = -1, +1,\cred{|} +1, -1, -1, \cred{|} -1, +1, \cred{|} +1 \,. \qquad 
\text{\!\!\!\!\!\!\!\!\!Reversing each block and gap in place gives}&\hfil \\
\tilde{w} &= +1, -1, \cred{|} -1, -1, +1, \cred{|} +1, -1, \cred{|} +1 \qquad 
\text{\!\!\!\!\!\!or dropping the dividers}&\hfil \\
          &= +1, -1, -1, -1, +1, +1, -1, +1\,. & & \\ 
\end{align*}

\begin{lemma}\label{lem:row180}
Let $I\in J([a]\times [b])$ correspond to the sign-word $w$, 
and let $\tilde{w}$ be the block-gap reversal of $w$.  
Then the sign-word of $\Phi_{J}(I)$ is $\tilde{w}$.  
In other words, rowmotion on order ideals is equivalent 
to block-gap reversal on corresponding sign-words.  
\end{lemma}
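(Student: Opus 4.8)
The plan is to work entirely in the height-function/sign-word picture of Definition~\ref{def:htfn}, using the defining description of rowmotion on order ideals as the composite $J(P)\to F(P)\to\cA(P)\to J(P)$. Thus $\rowmotion_J(I)$ is the order ideal generated by the antichain of \emph{minimal elements of the complementary filter} $P\setminus I$. The whole argument then reduces to showing that, in sign-word terms, (i) the minimal elements of $P\setminus I$ are recorded exactly by the blocks of $w$, and (ii) generating an order ideal from those elements reflects the path within each block and each gap.

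First I would identify blocks with addable boxes. Using $h_I(k)=|k|+2\,\#(I\cap\text{file }k)$, the bottom-most box of file $k$ not lying in $I$ is a minimal element of $P\setminus I$ precisely when both of its lower covers (in files $k-1$ and $k+1$) lie in $I$; since consecutive heights differ by $\pm1$, this happens iff $h_I(k-1)=h_I(k)+1$ and $h_I(k+1)=h_I(k)+1$, i.e., iff the sign-word letter entering file $k$ is $-1$ and the letter leaving it is $+1$. That is exactly a block $-1,+1$ straddling file $k$. Hence blocks correspond bijectively to the minimal elements of $P\setminus I$ (equivalently, to the valleys of the path), and the boundary files $-a,b$, where the heights are forced, contribute none.

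Next I would describe the new path. The elementary but key computation is that reversing a contiguous run of increments $d_{p+1},\dots,d_q$ of a height function, while fixing the endpoints, reflects that portion of the path about the horizontal midline of its window: the reversed height satisfies $h'(p+t)=h(p)+h(q)-h(q-t)$, so $h'(p)=h(p)$, $h'(q)=h(q)$, and a valley is exchanged with a peak. Applying this to the block/gap decomposition, each block $-1,+1$ (a valley) becomes $+1,-1$ (a peak) at the same file, and each gap $+1,\dots,+1,-1,\dots,-1$ (a peak, possibly degenerate at the boundary) becomes a valley. Thus the peaks of $\tilde{w}$ sit exactly at the files of the old blocks, i.e., at the old valleys.

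The remaining and main step is to check that this reflected path is genuinely the path of the order ideal generated by the old valley-boxes, not merely a path with peaks in the right places. The cleanest route is to verify the file-cardinalities: compute $m'_k=\#(\rowmotion_J(I)\cap\text{file }k)$ directly from the principal ideals generated by the valley-boxes, and compare with the cardinalities read off from $\tilde{w}$ via $h_{\tilde{w}}(k)=|k|+2m'_k$. Here the work is bookkeeping about how the reflections on two adjacent windows meet at their common boundary (a peak of $\tilde{w}$) and produce the maximal staircase descent forced by an order ideal generated from an antichain; once the endpoints of every window are pinned down (they are the bottoms of the old valleys together with the two forced boundary points $h_I(-a)=a$ and $h_I(b)=b$), this matching is a routine induction along the files. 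Matching the cardinalities shows $h_{\rowmotion_J(I)}=h_{\tilde{w}}$, hence that the sign-word of $\rowmotion_J(I)$ is $\tilde{w}$, as claimed. I expect this last gluing/cardinality check to be the only genuinely fiddly part of the proof.
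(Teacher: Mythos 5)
Your proposal is correct and follows essentially the same route as the paper's proof: identify the blocks (valleys) of $w$ with the minimal elements of the complement $P\setminus I$, observe that rowmotion makes exactly these the generators of $\Phi_{J}(I)$ so each block reverses to a peak in the same position, and then verify that the path of the generated ideal between consecutive peaks is the forced down-then-up staircase, i.e., the reversed gap. The paper executes your deferred ``bookkeeping'' step concretely: a gap between the blocks of two incomparable generators $(i,j)$ and $(i',j')$ consists of $j'-j-1$ up steps followed by $i-i'-1$ down steps, and the new ideal's boundary between the corresponding peaks is forced to be $i-i'-1$ down steps followed by $j'-j-1$ up steps, with boundary gaps handled similarly.
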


Note that (in general) the dividers correspond to the red dots 
in Figure~\ref{fig:rm42}, so one can visualize $\rowmotion_{J}$ as
reversing ($180^{\circ}$ rotation of)
each lattice-path segment that corresponds to a block or a gap in the
sign-word.  This is illustrated in Michael LaCroix's animations, (which require Adobe
acrobat),  within talk slides at
\url{http://www.math.uconn.edu/~troby/combErg2012kizugawa.pdf}.  
\begin{proof}
Consider Figure~\ref{fig:rm42}, where the elements of the poset 
are denoted by the \emph{squares} (not the dots), 
and the shaded portions indicate the order ideals to which
rowmotion is being applied.  
Note that the sign-word of $I$ indicates the lattice path that
traces out the boundary between $I$ and its complement $I^{C}$.  
For example, in the second picture, 
the lattice path follows the pattern $-1,+1,-1,-1,+1,-1$.  
Clearly the minimal elements of the complement $I^{C}$ 
occur exactly in the locations \emph{above} where 
we have a $-1,+1$ pair (indicating a down step followed
by an up step as we move from left to right along the lattice path).  
By definition of rowmotion, these squares 
become the generators of $\Phi_{J}(I)$.  
In particular, each block $-1,+1$ will map to its reversal $+1, -1$, 
so that these minimal elements of $I^{C}$ are now
\emph{maximal} in $\Phi_{J}(I)$.  

Now let $G$ be any gap occurring between two blocks $B$ and $B'$ 
corresponding to the minimal elements $(i,j)$ and $(i',j')$ in $I^{C}$.  
Then $G$ must consist of $j'-j-1\geq 0$ up steps, 
followed by $i-i'-1 \geq 0$ down steps 
(since the two minimal elements are incomparable).  
Now by definition of rowmotion, $(i,j)$ and $(i',j')$ are two
adjacent maximal elements of $\Phi_{J}(I)$, and so
the part of the sign-word of $\Phi_{J}(I)$ between
the corresponding $+1, -1$ segments will have the
form $-1,-1,\ldots,-1, +1,+1,\ldots,+1$.
Thus the lattice path segment that corresponds to this part
consists of $i-i'-1$ down steps followed by $j'-j-1$ up steps
(this is especially clear if one creates a generic diagram 
like those in Figure~\ref{fig:rm42}).
Similar arguments handle the cases where the gap occurs at the
beginning or end of the sign-word.

\end{proof}

An alternative way to compute the block-gap reversal of a word $w$ is to 
(1) prepend a $+$ and append a $-$, obtaining a new word $w'$;
(2) exchange the $i$th run of $+$'s in $w'$
with the $i$th run of $-$'s, for all applicable $i$, 
obtaining a new word $w''$; and
(3) delete the initial $-$ and terminal $+$ in $w''$.

It turns out that all we really need to know for purposes of proving homomesy
is that the sign-word for $I$ has $-1,+1$ in a pair of adjacent positions
if and only if the sign-word for $\rowmotion_{J}(I)$ has $+1,-1$ 
in the same two positions.
This can be seen directly for $J([a]\times [b])$ from the description of 
$\Phi_J$ given at the start of Section~\ref{sec:chains}.  (See also
Figure~\ref{fig:rm42}.)  This situation occurs if and only if
the antichain $A(\rowmotion_{J}(I))$ contains an element
in the associated file of $[a] \times [b]$.

\begin{theorem}
\label{rowmotion-ideals}
The cardinality statistic is $c$-mesic
under the action of rowmotion $\rowmotion_J$ on $J([a]\times [b])$, 
with $c={ab}/{2}$. 
\end{theorem}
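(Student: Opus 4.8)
The plan is to mimic the proof of Theorem~\ref{promotion-ideals} as closely as possible, reducing everything to showing that each height-function value $h_I(k)$ is homomesic and then exploiting the combinatorial description of rowmotion via block-gap reversal (Lemma~\ref{lem:row180}). By Proposition~\ref{prop:cardI}, since $\sum_k h_I(k) = \frac{a(a+1)}{2} + \frac{b(b+1)}{2} + 2\#I$, it suffices to prove that $h_I(k)$ is homomesic under $\rowmotion_J$ for each fixed $k \in [-a,b]$, and by telescoping (as in the promotion proof) it is enough to show that each increment $h_I(k) - h_I(k-1)$ is homomesic. These increments are exactly the letters of the sign-word, so the crux is: \emph{for each fixed position $i$, the average over a $\rowmotion_J$-orbit of the $i$th letter of the sign-word is $(b-a)/(b+a)$.}

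First I would set up the analogue of the square array from the promotion proof, but now the rows are the sign-words of $I, \rowmotion_J I, \rowmotion_J^2 I, \dots$ over a complete orbit. The essential difficulty is that, unlike the cyclic left-shift in the promotion case, block-gap reversal does \emph{not} permute the positions in any transparent columnwise-balanced way, so one cannot simply argue that each column contains exactly $a$ minus-ones and $b$ plus-ones. The key structural fact I would exploit is the one highlighted in the excerpt right before the theorem statement: the sign-word for $I$ has $-1,+1$ in adjacent positions $i, i+1$ if and only if the sign-word for $\rowmotion_J(I)$ has $+1,-1$ in those same two positions, and this happens exactly when the antichain $A(\rowmotion_J(I))$ has an element in the corresponding file. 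This gives a clean bookkeeping device linking the $i$th letter of the sign-word to file-occupancy of the antichain.

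The main step, which I expect to be the central obstacle, is to track how the value in a fixed position $i$ behaves across a full orbit and show its average equals $(b-a)/(b+a)$. I would try to reinterpret the quantity $h_I(k) - h_I(k-1)$ in terms of antichain elements in file $k$: from Definition~\ref{def:htfn}, $h_I(k) = |k| + 2\#(\text{elements of } I \text{ in file } k)$, so file-by-file the increments encode where the boundary path turns. I would then argue that over one complete $\rowmotion_J$-orbit, the ``$-1,+1$ at positions $(i,i+1)$'' events and ``$+1,-1$ at positions $(i,i+1)$'' events occur in balanced correspondence via the iff above, forcing the orbit-sum of each fixed letter to match the global average. The trickiest part is verifying that no positional bias accumulates—that the correspondence between block-reversals at a given location across successive applications of $\rowmotion_J$ is genuinely measure-preserving on the orbit—so I would likely need a separate lemma showing that over a full orbit every file is ``entered'' and ``exited'' by the antichain equally often, which is where the $(b-a)/(b+a)$ constant comes from (exactly as in Proposition~\ref{prop:ballot}).

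Alternatively, if the direct columnwise argument proves intractable, I would fall back on an equivariant-bijection strategy: since both promotion and rowmotion act on $J([a]\times[b])$ and Striker–Williams show rowmotion is conjugate to promotion in the toggle group, one might transport the already-established homomesy of $\#I$ under $\promotion_J$ (Theorem~\ref{promotion-ideals}) across the conjugacy. The obstruction here is that conjugacy in the toggle group permutes orbits and rescales orbit structure but does not automatically preserve a \emph{statistic's} orbit-averages unless the conjugating element interacts correctly with $\#I$; so I would first check whether the cardinality statistic is invariant (or transforms controllably) under the conjugating map before committing to this route. My expectation is that the direct sign-word/file-occupancy argument is cleaner and is the intended proof, with the balanced-orbit counting being the one genuinely nontrivial ingredient.
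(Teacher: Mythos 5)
Your proposal follows the paper's proof essentially step for step: the same reduction via Proposition~\ref{prop:cardI} and telescoping to the letters of the sign-word, the same key fact (that the sign-word of $I$ has $-1,+1$ in adjacent positions exactly when the sign-word of $\rowmotion_J(I)$ has $+1,-1$ there), and the same balancing argument over the orbit array, which forces every two consecutive column sums to agree and hence all of them to equal $N(b-a)/(a+b)$ since each row sums to $b-a$. The ``separate lemma'' you anticipate needing is not actually needed: the iff itself gives a bijection (via the successor map within the orbit, taken cyclically) between rows of type $(-1,+1)$ and rows of type $(+1,-1)$ at any fixed pair of adjacent positions, and that balance plus the fixed row sums is precisely the bookkeeping the paper carries out, with no residual ``positional bias'' to worry about.
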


\begin{proof}
As in the previous section, to prove that $\# I$ is homomesic under rowmotion,
it suffices to prove 
that all the increments $h_I(k)-h_I(k-1)$ are homomesic.  
There is a positive integer $N$ such that $\rowmotion^N = \id$
(since $J([a] \times [b])$ is finite\footnote{A result of
Fon-der-Flaass~\cite[Theorem 2]{Fon93} states that the size of any
$\rowmotion$-orbit in $[a]\times [b]$ is a divisor of $a+b$
(this also follows from Proposition~\ref{prop:awprop}), so 
that we can take $N = a+b$; but any $N$ will do here.}).
Now, proving that $h_I(k)-h_I(k-1)$ is homomesic
is equivalent to showing that for all $k$,
the sum of the $k$th letters of the sign-words of
$\rowmotion^0 I, \rowmotion^1 I, \ldots,
\rowmotion^{N-1} I$ is independent of $I$.  
Create a rectangular array with $N$ rows and $a+b$ columns,
where the rows are the sign-words of $I$
and its successive images under the action of $\rowmotion$.
Here for instance is the array for the example in Figure~\ref{fig:rm42}
with $P$ the poset $[4] \times [2]$ and $I$ the order ideal
generated by $(2,1)$:
$$
\begin{array}{cccccc}
- & - & + & - & - & + \\
- & + & - & - & + & - \\
+ & - & - & + & - & - \\
- & + & + & - & - & - \\
+ & - & - & - & - & + \\
- & - & - & + & + & -
\end{array}
$$
Consider any two consecutive columns of the array,
and the width-2 subarray they form.
There are just four possible combinations of values in a row of the subarray:
$(+1,+1)$, $(+1,-1)$, $(-1,+1)$, and $(-1,-1)$.
However, we have just remarked that a row is of type $(-1,+1)$ 
if and only if the next row is of type $(+1,-1)$
(where we consider the row after the bottom row to be the top row).
Hence the number of rows of type $(-1,+1)$
equals the number of rows of type $(+1,-1)$.
It follows that any two consecutive column-sums of the full array are
equal, since other row types contribute the same value to each column
sum. That is, within the original rectangular array, 
every two consecutive columns have the same column-sum.
Hence all columns have the same column-sum.
This common value of the column-sum must be $1/(a+b)$
times the grand total of the  values of the rectangular array.
But since each row contains $a$ $-1$'s and $b$ $+1$'s,
each row-sum is $b-a$, so the grand total is $N(b-a)$,
and each column-sum is $N(b-a)/(a+b)$.
Since this is independent of which rowmotion orbit we are in,
we have proved homomesy for letters of the sign-word of $I$
as $I$ varies over $J([a] \times [b])$,
and this gives us the desired result about $\# I$,
just as in the proof of Theorem \ref{promotion-ideals}.
\end{proof}

\subsubsection{Rowmotion on antichains in $\cA ([a]\times [b])$}\label{sss:rmAP}

In his survey article on promotion and evacuation, Stanley~\cite[remark
after Thm~2.5]{Sta09} gave a concrete equivariant bijection between
rowmotion $\rowmotion_{A}$ acting on antichains in $\cA ([a]\times [b])$ and
cyclic rotation of certain words on $\{1,2\}$. Armstrong (private
communication) gave a variant description that clarified the correspondence, 
which he learned from Thomas and which we use in what follows.  

\begin{figure}[t]
\begin{center}
\includegraphics[width=3.2in]{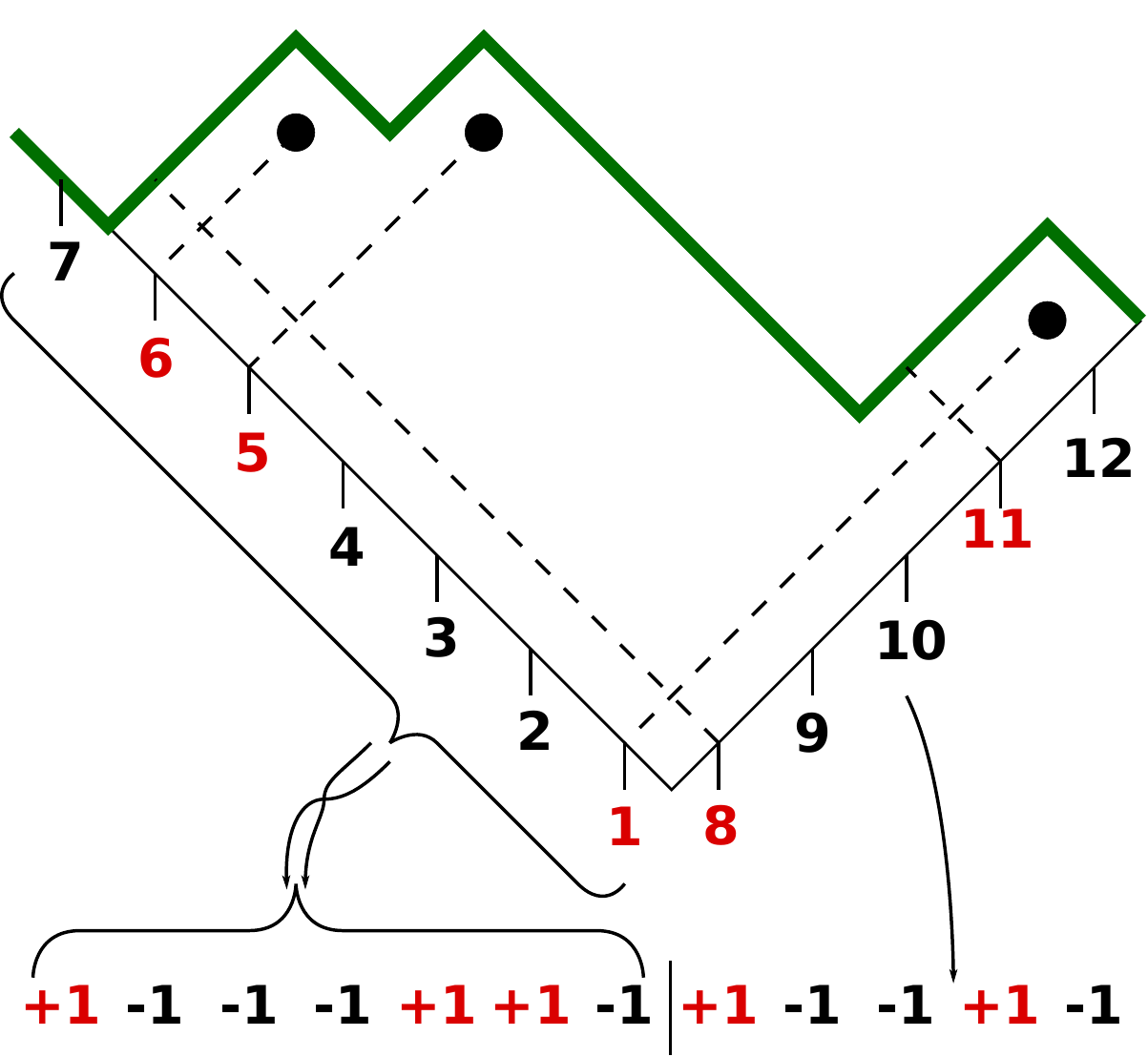}
\end{center}
\caption{The Stanley-Thomas word for a 3-element antichain 
in $\cA ([7]\times [5])$.  Red and black correspond to $+1$
and $-1$ respectively.}
\label{fig:aw75}
\end{figure}

\begin{definition}\label{def:aw}

Fix $a$, $b$, and $n=a+b$.
For every given $k \in [a]$, we call the subset
$\{(k,\ell): \ \ell \in [b]\}$ of $[a] \times [b]$
the \textbf{$\mathbf{k}$th positive fiber}.
For every given $\ell \in [b]$, we call the subset
$\{(k,\ell): \ k \in [a]\}$ of $[a] \times [b]$
the \textbf{$\mathbf{\ell}$th negative fiber}.
Define the \textbf{Stanley-Thomas word} 
$w(A)$ of an antichain $A$ in $[a] \times [b]$
to be $w_{1}w_{2}\dotsb w_{a+b}\in \{-1,+1 \}^{a+b}$ with
\[
w_{i}:=\begin{cases}
+1,& \text{if $A$ has an element in \emph{positive fiber} $i$ 
(for $i\in [a]$) or}\\
\  & \ \ \ \text{$A$ has NO element in \emph{negative fiber} $i-a$ 
(for $a+1\leq i\leq n$);}\\ 
-1 & \text{otherwise.}\\ 
\end{cases} 
\]
\end{definition}

As usual, if $u$ is a word we write $u_i$ to denote its $i$th letter.

\begin{example}\label{eg:aw}
As illustrated in Figure~\ref{fig:aw75}, 
let $P = [7] \times [5]$ and $A=\{ (1,5), (5,3), (6,2)\}$.  
By definition, the Stanley-Thomas word $w(A)$ should have $+1$ in
entries 1, 5, and 6 (positive fibers where $A$ appears) and in entries 8 and 11 
(negative fibers where $A$ does not appear, with indices shifted by $7=a$).  
Indeed one sees that $w(A)=+1,-1,-1,-1,+1,+1,-1, \mid +1,-1, -1, +1, -1$
(where the divider between $a$ and $a+1$ is just for ease of reading).  
Note that applying rowmotion gives $A'=\rowmotion (A)=\{(2,4), (6,3), (7,1)\}$ 
with Stanley-Thomas word 
$w(A')= -1, +1,-1,-1,-1,+1,+1, \mid -1, +1, -1, -1, +1=C_{R}\,w(A)$, the
rightward cyclic shift of $w(A)$.  
\end{example}

\begin{proposition}[Stanley-Thomas]
\label{prop:awprop}
The correspondence $A\longleftrightarrow w(A)$ is a bijection from
$\cA ([a]\times [b])$ to binary words $w\in \{-1,+1 \}^{a+b}$ with exactly
$a$ occurrences of $-1$ and $b$ of $+1$.  
Furthermore, this bijection is equivariant
with respect to the actions of rowmotion $\rowmotion_{A}$ 
and rightward cyclic shift $C_{R}$.  
\end{proposition}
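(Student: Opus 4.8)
The plan is to reduce both assertions to a concrete parametrization of antichains by their fiber occupancy. Since $A$ is an antichain in $[a]\times[b]$, any two of its elements sharing a first coordinate (or a second coordinate) would be comparable, so $A$ meets each positive fiber and each negative fiber in at most one point. Writing $A=\{(k_1,\ell_1),\dots,(k_m,\ell_m)\}$ with $k_1<\cdots<k_m$ forces $\ell_1>\cdots>\ell_m$, so $A$ is completely determined by the pair $(S_+,S_-)$, where $S_+=\{k_1,\dots,k_m\}\subseteq[a]$ is the set of occupied positive fibers and $S_-=\{\ell_1,\dots,\ell_m\}\subseteq[b]$ is the set of occupied negative fibers; one reconstructs $A$ by pairing the smallest element of $S_+$ with the largest of $S_-$, and so on, and conversely every pair with $|S_+|=|S_-|$ arises this way. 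The Stanley-Thomas word reads off $S_+$ in its first $a$ letters and the complement of $S_-$ in its last $b$ letters, so it has exactly $|S_+|+(b-|S_-|)=b$ letters $+1$ and hence $a$ letters $-1$. Conversely, a word with exactly $a$ letters $-1$ recovers $S_+$ (the positions of $+1$ among the first $a$) and $S_-$ (the positions of $-1$ among the last $b$), and the count of $-1$'s forces $|S_+|=|S_-|$; this yields the bijection.

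For the equivariance I would first record exactly what must be checked. Unwinding $C_R$, the identity $w(A')=C_R(w(A))$ for $A'=\rowmotion_A(A)$ with occupied fibers $(S_+',S_-')$ is equivalent to the four conditions: $k\in S_+'\iff k-1\in S_+$ for $2\le k\le a$; $1\in S_+'\iff b\notin S_-$; $\ell\in S_-'\iff \ell-1\in S_-$ for $2\le\ell\le b$; and $1\in S_-'\iff a\notin S_+$. So it suffices to compute which fibers $A'$ occupies and match these.

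To compute $A'$ I would use the composite description $\rowmotion_A(A)=\min(P\setminus I(A))$, where $I(A)$ is the order ideal generated by $A$. Introduce the profile $g(k)=\max\{\ell_j:k_j\ge k\}$ (with $g(k)=0$ if no such $j$), so that $I(A)$ meets positive fiber $k$ in the elements with $\ell\le g(k)$; here $g$ is weakly decreasing and drops precisely at the values $k=k_j+1$. A point $(k,\ell)$ is minimal in the complementary filter iff $\ell=g(k)+1\le b$ and ($k=1$ or $g$ drops at $k$). Hence positive fiber $k$ meets $A'$ exactly when $g(k)<b$ and ($k=1$ or $k-1\in S_+$); the strict inequalities $\ell_{j+1}<\ell_j\le b$ make $g(k)<b$ automatic whenever $g$ drops at some $k\ge2$, and for $k=1$ the condition $g(1)=\ell_1<b$ is precisely $b\notin S_-$. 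This establishes the first two conditions. Running the identical argument with the two coordinates interchanged — using the profile $h(\ell)=\max\{k_j:\ell_j\ge\ell\}$ along negative fibers — gives the remaining two.

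The main obstacle is the equivariance, and within it the delicate book-keeping of the minimal elements of the complementary filter: one must verify that the ``vertical'' minimality condition $\ell=g(k)+1$ and the ``horizontal'' minimality condition (a drop of $g$, equivalently $k-1\in S_+$) combine to put exactly one element of $A'$ in the correct fibers, and that the boundary terms wrap correctly — the data at positive fiber $a$ governs negative fiber $1$ of $A'$, and vice versa, which is exactly the cyclic ``carry'' in $C_R$. I would finish by checking the edge cases (the empty antichain, and fibers at the extremes $k=a$ or $\ell=b$) to confirm that the automatic inequalities $g(k)<b$ and $h(\ell)<a$ hold and that out-of-range indices $k_j+1>a$ or $\ell_j+1>b$ correctly produce no occupied fiber.
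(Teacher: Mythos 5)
Your proposal is correct and takes essentially the same route as the paper: the bijection is established by the same counting argument plus the observation that the occupied positive and negative fibers admit a unique antichain pairing, and equivariance is proved by computing the minimal elements of the complement of the generated order ideal fiber by fiber, with interior fibers shifting by one and the wrap-around handled at positions $a$ and $a+b$. Your profile function $g$ and the explicit transposition symmetry are just a more systematic packaging of the paper's direct case analysis on $u_i = \pm 1$.
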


Note that the classical result that
$\Phi_A^{a+b}$ is the identity map follows immediately.

\begin{proof}
Let $\cW_{a,b}$ denote the set of binary words in $\{-1,+1 \}^{a+b}$ 
with exactly $a$ occurrences of $-1$ and $b$ of $+1$. 
The map $A\mapsto w(A)$ is clearly well-defined into $\{-1,+1 \}^{a+b}$.  
By definition, the number of occurrences of $+1$ among the first $a$
letters of $w(A)$ is $\#A$; among the remaining $b$ letters, it is $b-\#A$, 
giving a total of $b$ occurrences of $+1$ in $w(A)$.  
Thus $w(A)\in \cW_{a,b}$.  

This map has an inverse as follows.  Given any word $u \in \cW_{a,b}$,
let $k$ denote the number of indices $1 \leq i \leq a$ with $u_i = +1$,
and let $1 \leq i_{1} < i_{2} < \dots < i_{k} \leq a$ denote those indices. 
There must thus be $a-k$ indices $1 \leq i \leq a$ with $u_i = -1$,
and therefore $k$ indices $a+1 \leq j \leq a+b$ with $u_j = -1$
(since the total must sum to $a$ by definition of $\cW_{a,b}$).  
Let $a+1\leq j'_{1} < j'_{2} < \dots < j'_{k} \leq a+b$ denote those indices
corresponding to $-1$, and set $j_{\ell}: = j'_{\ell} - a$.  Then the
corresponding antichain $A$ is given by 
\[
A = \{(i_{1},j_{k}), (i_{2}, j_{k-1}),\dots , (i_{k}, j_{1}) \}\,.  
\]
(See Example~24.) 
Note that this is the only pairing of the indices that gives an
antichain in $[a]\times [b]$.  It follows from the definitions
that for this $A$, $w(A)=u$, whence $w$ is a bijection between $\cA
([a]\times [b])$ and $\cW_{a,b}$. 

It remains to show that the following diagram commutes:

\[
\xymatrix{
{\cA ([a]\times [b]) } \ar[r]^-w \ar[d]_{\Phi_{A}} & \cW_{a,b} \ar[d]^{C_{R}} \\
{\cA ([a]\times [b]) } \ar[r]_-w                   & \cW_{a,b}
}\,. 
\]
To that end, let $A$ be any antichain in $\cA ([a]\times [b])$, and set
$A':=\Phi (A)$, $u:=w(A)$ and $u' := w(A')$.  We want to show that $u' =
C_{R}u$. 

Recall our initial definition at the start of this section of $\Phi_{A}$ 
as the composition 
\[
\begin{aligned}
\Phi_{A}: & \cA (P) & \rightarrow  & \ \ J(P) & \rightarrow  & \ \ F(P) 
& \rightarrow & \ \ \cA (P) \\
	  &  A & \mapsto  & \ \ I_{A}  & \mapsto  & \ \ \overline{I_{A}} 
& \mapsto & \ \ A' \;\,, 
\end{aligned}
\]
where $A'$ is the set of minimal elements of the complement 
of the order ideal $I_{A}$ generated by $A$.  
Suppose first that $i\in [a-1]$.  We aim to show that $u'_{i+1} = u_i$.
If $u_{i}=+1$, then there is an
antichain element $(i,j) \in A$ in positive fiber $i$, 
which is not the top positive fiber.  
Because $A$ is an antichain, any element of $A$ in positive fiber $i+1$ 
must lie in a negative fiber $j'<j$. (This includes the case
when there is no element of $A$ in positive fiber $i+1$.)  
This means that the complement $\overline{I_{A}}$ of the
corresponding order ideal will have a minimal element in positive fiber $i+1$.  
(A glance at Figure~\ref{fig:aw75} should make this clear.)  Thus, by
definition of $\Phi_{A}$, $A'$ will have an element in positive fiber $i+1$, 
so $u'_{i+1}=+1$. 

On the other hand, if $u_{i}=-1$, 
then no element of $A$ lies in positive fiber $i$.  
If $\overline{I_A}$ had a minimal element $(i+1,p)$ in positive fiber $i+1$, 
then $(i,p)$ would lie in $I_A$ and thus below an element of $A$.  
But said element would have to lie in positive fiber $i$ (since $(i+1,p)
\in \overline{I_A}$), contradicting the fact that no element of
$A$ lies in positive fiber $i$. Hence, $u'_{i+1} = -1$.  
So $u'_{i+1} = u_i$ in either case.

Similar arguments show that for $j\in [b-1]$, $u'_{a+j+1} = u_{a+j}$.
It remains only to check positions $a$ and $a+b$ in $u$.  

If $u_{a}=1$, then $I_{A}$ includes all of negative fiber 1.
Therefore, $\overline{I_{A}}$, and hence $A'$,  
has no elements at all in negative fiber 1, and
$u'_{a+1}=+1$ by definition.  On the other hand, if $u_{a}=-1$, then
$I_{A}$ includes only a proper subset of negative fiber 1.  
This means that $\overline{I_{A}}$, and hence $A'$, 
must have elements in negative fiber 1,
since the only elements smaller than an element in negative fiber 1 also lie in
negative fiber 1.  Thus, $u'_{a+1}=-1$. 

A similar argument shows that $u'_{1}=u_{a+b}$, and we have $u'=C_{R}\,u$
as required.  
\end{proof}

\begin{theorem}
\label{rowmotion-antichains}
The cardinality statistic is $c$-mesic
under the action of rowmotion $\rowmotion_A$ on $\cA ([a]\times [b])$, 
with $c=ab/(a+b)$.
\end{theorem}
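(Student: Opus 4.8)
The plan is to transport the entire problem to the world of binary words via the Stanley-Thomas bijection of Proposition~\ref{prop:awprop}, where the computation reduces to an averaging argument essentially identical to the one in subsection~\ref{subsec-bits}.

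First I would invoke Proposition~\ref{prop:awprop}: the map $A \mapsto w(A)$ is an equivariant bijection between $(\cA([a]\times[b]),\rowmotion_A)$ and $(\cW_{a,b},C_R)$, where $C_R$ is the rightward cyclic shift. Because the bijection is equivariant, it carries $\rowmotion_A$-orbits to $C_R$-orbits bijectively and preserves all orbit-averages, so it suffices to identify the word statistic corresponding to $\#A$ and to compute its $C_R$-orbit-average. From Definition~\ref{def:aw} (and the observation already made in the proof of Proposition~\ref{prop:awprop}), the quantity $\#A$ equals the number of $+1$'s among the first $a$ letters of $w(A)$; I would write $g(w) := \#\{i \in [a] : w_i = +1\}$ for this statistic on words.

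Next I would compute the orbit-average of $g$ using the superorbit device from subsection~\ref{subsec-bits}. Every $C_R$-orbit has some size $d$ dividing $n := a+b$, so repeating it $n/d$ times produces a superorbit $w, C_R w, \ldots, C_R^{n-1} w$ of length $n$ with the same average as the original orbit. Writing indices modulo $n$ so that $(C_R^k w)_i = w_{i-k}$, I would sum $g$ over the superorbit:
\[
\sum_{k=0}^{n-1} g(C_R^k w) = \sum_{k=0}^{n-1} \#\{i \in [a] : w_{i-k} = +1\} .
\]
For each fixed position $j$ with $w_j = +1$ and each $i \in [a]$, there is exactly one $k \in \{0,\ldots,n-1\}$ with $i - k \equiv j \pmod n$, so each such position $j$ contributes exactly $a$ to the double sum. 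Since $w \in \cW_{a,b}$ has exactly $b$ letters equal to $+1$, the total equals $ab$, and dividing by the superorbit length $n$ yields the orbit-average $ab/(a+b)$, as claimed.

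The main obstacle is not any deep difficulty but rather two pieces of bookkeeping: correctly identifying $\#A$ with the count of $+1$'s in the first $a$ positions of the Stanley-Thomas word, and handling the fact that $C_R$-orbits need not all have the same size. The superorbit trick cleanly resolves the latter, reducing everything to the index count above; once the statistic is correctly translated, the computation is immediate. It is worth noting that this argument also re-derives, via the equivariant bijection, that all the machinery rests ultimately on Proposition~\ref{prop:awprop}, so the only genuinely nontrivial input has already been established.
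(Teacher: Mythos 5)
Your proof is correct and takes essentially the same approach as the paper: both rest on the Stanley--Thomas equivariant bijection of Proposition~\ref{prop:awprop}, identify $\#A$ with the number of $+1$'s among the first $a$ letters of the word, and average over a superorbit of length $a+b$ to get the total $ab$. The only cosmetic difference is that the paper organizes the double count fiber-by-fiber (stating the refined claim that $\#(A \cap S)$ is homomesic for each positive fiber $S$), whereas you swap the order of summation and count, for each $+1$ in the word, the $a$ shifts carrying it into the first $a$ positions.
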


\begin{proof}
It suffices to prove a more refined claim, namely,
that if $S$ is any fiber of $[a] \times [b]$,
the cardinality of $A \cap S$ is homomesic under the action
of rowmotion on $A$.  By the previous result, 
rowmotion corresponds to cyclic shift of the Stanley-Thomas word,
and the letters of the Stanley-Thomas word tell us which fibers 
contain an element of $A$ and which do not.  
Specifically, for $1 \leq k \leq a$, if $S$ is the $k$th positive fiber,
then $A$ intersects $S$ iff the $k$th letter 
of the Stanley-Thomas word is a $+1$.
Since the Stanley-Thomas word contains $a$ $-1$'s and $b$ $+1$'s,
the superorbit of $A$ of size $a+b$
has exactly $b$ elements that are antichains that intersect $S$.
That is, the sum of $\# (A \cap S)$ over the superorbit of size $a+b$
is exactly $b$, for each of the $a$ positive fibers of $[a] \times [b]$.
Summing over all the positive fibers, we see that
the sum of $\# A$ over the superorbit is $ab$.
Hence $\# A$ is homomesic with average $ab/(a+b)$.
\end{proof}

\section{Summary}

First we summarize what we know about the specific case 
of products of two chains, going beyond what is proved here
and including results that will be proved in follow-up articles
such as~\cite{EP13}.
Then we discuss how the case of $[a] \times [b]$ can be conceived of 
as a small component of a larger research program.
Lastly, we offer some thoughts about directions
that this research program might take.

\subsection{Rowmotion and promotion for order ideals and antichains}

A natural way to find homomesies for the action of a map $\tau$
on some combinatorial set $\cS$
is to start with some finite set 
of not necessarily homomesic functions $f_1,f_2,\dots,f_N$
associated with the combinatorial presentation of the set $\cS$,
and then to inquire which linear combinations of the $f_i$'s are homomesic.
For example, if $\cS$ is the set of order ideals of a poset $P$,
then for each element $x \in P$ we have
an indicator function $1_x: \cS \rightarrow \{0,1\}$
such that $1_x (I)$ is 1 if $x \in I$ and 0 otherwise.
We look in the span of the functions $f_i$ (call it $V$);
the functions in $V$ that satisfy homomesy form a subspace of $V$
whose intersection with the subspace of invariant functions in $V$
consists only of the constant functions.

In the case of rowmotion acting on order ideals of $[a] \times [b]$,
we find that the function $\sum_{x \in F} 1_x$ is homomesic
whenever $F$ is a file of $[a] \times [b]$.
Also, $1_x + 1_y$ is homomesic whenever $x$ and $y$ are
opposite elements of $[a] \times [b]$
(that is, they are obtained from one another by
rotating the poset 180 degrees about its center).
These can be shown to generate the subspace of homomesies.

The situation is the same for promotion 
acting on order ideals of $[a] \times [b]$.
That is because of the extremely intimate relationship between
rowmotion and promotion, as seen for instance in
Theorem 5.4 of~\cite{SW12}.

In the case of rowmotion acting on antichains of $[a] \times [b]$,
the situation is different.
Now $\cS$ is the set of antichains of a poset $P$,
and for each element $x \in P$ we have
an indicator function $1_x: \cS \rightarrow \{0,1\}$
such that $1_x (A)$ is 1 if $x \in A$ and 0 otherwise.
Although this vector space, like the one considered above, is $|P|$-dimensional,
there is no way to write the $|P|$ indicator functions we have just defined
as linear combinations of the $|P|$ indicator functions considered above.
Hence there is no reason to expect the subspace of homomesies for antichains
to have anything to do with the subspace of homomesies for order ideals.

For rowmotion on antichains,
we find that the function $\sum_{x \in F} 1_x$ is homomesic
whenever $F$ is a fiber of $[a] \times [b]$.
Also, $1_x - 1_y$ is 0-mesic whenever $x$ and $y$ are
opposite elements of $[a] \times [b]$.
These can be shown to generate the subspace of homomesies.

For promotion on antichains, the situation is not so clear.
One thing we do know is 
that the homomesic subspace under the action of promotion
is {\it not} the same as the homomesic subspace under the action of rowmotion
(Theorem 5.4 of~\cite{SW12} cannot be applied here).
In particular, the total cardinality statistic is not homomesic in this case.
However, other statistics are homomesic.
A natural open problem is to settle this fourth case.

More broadly one can ask the same sorts of questions
when $[a] \times [b]$ is replaced by other rc-embedded posets 
in the terminology of~\cite{SW12}.
Preliminary work by the authors and others suggests that
typically the subspace of homomesies is substantial.

It should be stressed that the choice of an ambient space of statistics
plays a key role in determining what one finds.
The action of rowmotion on order ideals is conjugate to
the action of rowmotion on antichains,
but in choosing between the order ideals picture and the antichains picture
one is choosing between two different spaces of statistics
(one generated by the indicator functions arising from order ideals
and the other generated by the indicator functions arising from antichains).
Since these are two different spaces,
their homomesic subspaces can be (and are) different.
As a more trivial example,
note that if one considers the (huge) vector space
spanned by the indicator functions $1_s: \cS \rightarrow \{0,1\}$ ($s \in \cS$)
such that $1_s(s')$ is 1 if $s=s'$ and 0 otherwise,
then the space of homomesies is large but not very interesting,
as it reflects only the orbit-structure of the action,
in a very simple way.

\subsection{Cyclic sieving}

We have observed informally that the sorts of combinatorial objects
that exhibit the cyclic sieving phenomenon~\cite{RSW04, RSW14}
also tend to exhibit the ``homomesy phenomenon''
(by which we mean, the abundance of homomesies).
It is natural to ask whether the connection goes both ways.
We think the answer is No.
Specifically, we can construct examples of (conjectural) homomesy
in which the order of the cyclic group generated by $\tau$
is much larger than the size of $\cS$
(e.g., $|\cS|=377$ while the order of $\tau$ exceeds 3 million).
This is very unlike typical instances of the CSP,
for which we have actions of small cyclic groups on large combinatorial sets.

\subsection{Equivariant bijections}
\label{subsec-equivariant}

Given the role that equivariant bijections play 
in the proofs of homomesy results,
one might come to the view that the bijections are what is truly fundamental,
while the homomesies are epiphenomena.
We have some sympathy for this point of view.
Those leaning in this direction should view homomesies as empirical indicators 
of the existence of (known or unknown) equivariant bijections
whose unearthing renders the homomesies explicable.  

However, it should be borne in mind that some homomesy results
do not follow from the existence of a single equivariant bijection,
but from the existence of many of them;
that is, sometimes a function is shown to be homomesic
by breaking it down as a linear combination of components
that are separately homomesic,
where different components require different equivariant bijections.
It's also worth noting that not all equivariant bijections 
used to prove homomesy are with objects that are being cyclically
rotated, e.g., Lemma~\ref{lem:row180}.  
Above all, the homomesy point of view brings to the fore
the notion that homomesies form a vector space.

\subsection{Complementarity}

In section 2 we saw several cases in which the ambient vector space $V$
(consisting of real-valued functions on $\cS$) can be written as the 
direct sum of the subspace of 0-mesic functions and the subspace of 
invariant functions under the action of $\tau: \cS \rightarrow \cS$.
These were the cases in which $V$ was closed under $\tau$
in the sense that, for every $f$ in $V$, 
$f \circ \tau$ is also in $V$
(so that in fact $f \circ \tau^k$ is in $V$ for all $k \geq 0$).
In that situation, the complementarity between 0-mesy and invariance
can be seen as a special case of the complementarity
between the image and the kernel of a projection map.

This kind of sharp complementarity between 
the notions of 0-mesy and invariance was not seen in section 3.
However, we could recover complementarity by suitably enlarging $V$
so that it includes the indicator functions of all events of the form
``the poset element $(k, \ell)$ belongs to $\rowmotion(I)$''
(or, in the case of actions on antichains, ``\ldots belongs to $\rowmotion(A)$).
It would be interesting to classify 0-mesies and invariants
of rowmotion and promotion in this setting.

\subsection{Promising avenues}

We have already mentioned that situations 
in which cyclic sieving has been observed 
have been (and mostly will continue to be)
good places to dig in search of homomesies.
One example is the CSP proved by Brendon Rhoades~\cite{R10}.

As another example, we mention the study of 
rowmotion on the product of three chains.
What are the homomesies for the action of rowmotion 
on order ideals or antichains in $[a] \times [b] \times [c]$?  
Preliminary study indicates that non-trivial homomesies exist
for generic $a$, $b$, and $c$.

Toggles as discussed in subsection~\ref{subsec-toggle} can be viewed
in the more general context of flipping in polytopes.
This point of view was first proposed (in a special case) in~\cite{KB95}
and is developed more fully in~\cite{EP13}.
Products of toggles in this geometrical setting
seem like a likely source of interesting homomesies.

It would be extremely interesting if homomesies showed up
in the discrete dynamical systems associated with cluster algebras.
The example of subsection~\ref{ssec:five}
suggests that cluster algebras of type A
(associated with frieze patterns)
might be a natural place to look.

\bibliographystyle{abbrvnat}
\bibliography{sample}
\label{sec:biblio}

\end{document}